\newtheorem{thm}{Theorem}
\newtheorem{lem}{Lemma}
\theoremstyle{definition}
\theoremstyle{remark}
\newcommand{\R}{\mathbb{R}}
\newcommand{\E}{\mathbb{E}}
\numberwithin{equation}{section} \numberwithin{lem}{section}
\numberwithin{thm}{section} \numberwithin{prop}{section}
\numberwithin{cor}{section} \numberwithin{rem}{section}
\title[kinetic flocking models with local alignment]{Error Estimation in the Mean-Field Limit of Kinetic Flocking Models with Local Alignments}
\author{Jinhuan Wang$^{\,1}$, Keyu Li$^{\,1}$, and Hui Huang$^{\,2}$}
\thanks{The work of J. Wang is partially supported by National Natural Science Foundation of China Grants No. 12171218, LiaoNing Revitalization Talents Program Grant No. XLYC2007022.}
\thanks{Corresponding author: Hui Huang}
\begin{document}
\maketitle
\begin{center}
{\footnotesize
1-School of Mathematics, Liaoning University, Shenyang, 110036, P. R. China \\
email:  wjh800415@163.com; lky\_lnu@163.com\\
 \smallskip
2-Department of Mathematics and Scientific Computing, Karl-Franzens-Universit\"{a}t Graz, 8010 Graz, Austria
\\
email: hui.huang@uni-graz.at
}
\end{center}
\maketitle
\date{}
\begin{abstract}
In this paper, we present an innovative particle system characterized by moderate interactions, designed to accurately approximate kinetic flocking models that incorporate singular interaction forces and local alignment mechanisms. We establish the existence of weak solutions to the corresponding flocking equations and provide an error estimate for the mean-field limit. This is achieved through the regularization of singular forces and a nonlocal approximation strategy for local alignments. We show that, by selecting the regularization and localization parameters logarithmically with respect to the number of particles, the particle system effectively approximates the mean-field equation.

\end{abstract}

{\small {\bf Keywords:} Moderate Interacting Particle Systems, Propagation of Chaos, Local Alignment, Cucker-Smale Equation, Error Estimate}

\newtheorem{theorem}{Theorem}[section]
\newtheorem{definition}{Definition}[section]
\newtheorem{lemma}{Lemma}[section]
\newtheorem{proposition}{Proposition}[section]
\newtheorem{corollary}{Corollary}[section]
\newtheorem{remark}{Remark}[section]
\renewcommand{\theequation}{\thesection.\arabic{equation}}
\catcode`@=11 \@addtoreset{equation}{section} \catcode`@=12

\section{INTRODUCTION}
In this work, we explore the following kinetic Cucker-Smale (C-S) flocking model characterized by its inclusion of confinement, a nonlocal interaction, a local alignment force, and a diffusion component
	\begin{eqnarray}\label{pde1}
    &\partial_t f+v\cdot\nabla_x f- \nabla_v\cdot[\big(\gamma v+\lambda(\nabla_x V +\nabla_x W*\rho)\big)f]=\nabla_v\cdot[\beta(v-u)f+\sigma\nabla_vf],
    \end{eqnarray}
subject to the initial data
	$$f(x,v,0) = f_0, \quad (x,v)\in \R^d\times \R^d.$$
Here $d\ge 3$, $f = f (x,v,t)$ is the particles distribution function at $(x, v,t)\in \mathbb R^d\times \mathbb R^d\times\mathbb R_+$, $\rho=\rho(x,t)$ and $u=u(x,t)$ are local particles density and velocity respectively, which are given by 
$$ \rho = \int_{\R^d} f dv\quad \mbox{and} \quad u=\frac{\int_{\mathbb{R}^d} vf dv}{\rho}.$$
The function $V(x)=|x|^2/2$ is a smooth confinement potential and $W(x)$ is an interaction potential, which is chosen to be the Newtonian potential case as following form
\begin{align*}
W(x) = \frac{1}{(d-2)|B(0,1)|}\frac{1}{|x|^{d-2}},
\end{align*}
where $|B(0,1)|$ denotes the volume of unit ball $B(0,1)$ in $\mathbb{R}^d$, i.e. $|B(0,1)| = \pi^\frac{d}{2}/\Gamma(d/2+1)$, $\Gamma(\cdot)$ denotes the Gamma function.  In (\ref{pde1}), the first two terms account for the free transport of particles, while the third term consists of linear damping with strength $\gamma > 0$ and the confinement and interaction forces due to potentials with strength $\lambda > 0$.
The right-hand side of (\ref{pde1}) represents the local alignment forces with $\beta>0$ and the diffusion term in velocity with $\sigma>0$. It denotes the nonlinear
Fokker–Planck operator in \cite{villani2002review} given by
\begin{align*}
\nabla_v\cdot[\beta(v-u)f+\sigma\nabla_vf] = \sigma\nabla_v\cdot\Big(f\nabla_v \log\frac{f}{M_u}\Big)
\end{align*}
with the local Maxwellian
\begin{align*}
M_u:=\frac{\beta^{d/2}}{(2\pi\sigma)^{d/2}}\exp\Big(-\frac{\beta|u-v|^2}{2\sigma}\Big).
\end{align*}
Note that this local alignment force had been introduced in \cite{karper2013existence} for swarming models. In fact, it can be understood as the localized version of the nonlinear damping term introduced by \cite{motsch2011new} as a suitable normalization of the C-S model \cite{cucker2007emergent}. And it is also a nonlinear damping relaxation towards the local velocity used in classical kinetic theory \cite{cercignani2013mathematical, villani2002review}.

The kinetic C-S equation, pivotal in describing the motion of self-propelled particles such as bird flocking, fish schooling, and phenomena across biology, the internet, and sociology, was first introduced by Cucker and Smale in 2007 \cite{cucker2007emergent,cucker2007mathematics}. This foundational work has been further examined and applied in various studies, including those by \cite{grimm2005individual,huth1992simulation,lukeman2010inferring,motsch2011new,sneyd2001self,topaz2006nonlocal,viscido2004individual}. In 2008, Ha and Tadmor \cite{ha2008particle} expanded upon this by deriving a Vlasov-type mean-field model incorporating the C-S particles term and demonstrating its time-asymptotic flocking behavior for compactly supported initial data. Motsch and Tadmor, in 2011 \cite{motsch2011new}, proposed a modified C-S model featuring normalized and non-symmetric alignment. Although this variant was not the central focus of their study, it introduced an important perspective on alignment mechanisms. The exploration of the C-S model continued with Karper, Mellet, and Trivisa in 2013 \cite{karper2013existence}, who investigated the model's dynamics with strong local alignment, noise, self-propulsion, and friction, proving the existence of weak solutions. Their subsequent works in 2014 \cite{karper2014strong} and 2015 \cite{karper2015hydrodynamic} further validated the existence of weak solutions for the Motsch–Tadmor model and examined the singular limits of the C-S model under conditions of strong noise and alignment, respectively, illustrating convergence to the Euler-flocking system. Choi's 2016 study \cite{choi2016global} on the global well-posedness and asymptotic behavior of the Vlasov–Fokker–Planck equation with local alignment forces marked a significant advancement, demonstrating global existence and uniqueness of classical solutions. Lastly, in 2018, Figalli and Kang \cite{figalli2018rigorous} rigorously analyzed the singular limit of the C-S model with strong local alignment, showing its convergence to a pressureless Euler system with nonlocal flocking dissipation.
More recently, Carrillo and Choi explored the kinetic C-S flocking model, focusing on aspects of confinement, nonlocal interaction, and local alignment forces. In their 2020 study \cite{carrillo2020quantitative}, they not only discussed the existence of weak solutions to this model but also established quantitative bounds on the error between the kinetic equation and the aggregation equation through the limit of large friction. Building on this work, they further investigated a class of Vlasov-type equations with nonlocal forces in 2021 \cite{carrillo2021quantifying}, paying special attention to the existence of weak solutions for a continuity type equation as in (\ref{pde1}).

The primary goal of this paper is to deduce equation (\ref{pde1}) from a stochastic many-particle system. To this end, we consider a filtered probability space defined by $(\Omega, \mathcal{F}, (\mathcal{F})_{t\geq 0}, \mathbb{P})$ and introduce $\{B^i\}_{1\le i\le N}$, a collection of independent $\mathcal{F}_t$-Brownian motions. The dynamics of the system, which features moderate interactions among numerous particles, are described in relation to a set of parameters denoted by $\xi:= (\varepsilon, \delta, \nu)$. The model's formulation is as follows:
	\begin{align}\label{sde1}
        \left\{\begin{aligned}
		dX_{\xi,N}^{i} (t) =& V_{\xi,N}^{i}(t) dt,\qquad i\in[N]:=\{1\cdots\cdots N\},\\
		dV_{\xi,N}^{i}(t) = &\sqrt{2\sigma}dB^i(t)-\gamma V_{\xi,N}^{i}(t)dt- \beta \Big(V_{\xi,N}^{i}(t)- u_\xi\big(X_{\xi,N}^{i}(t)\big)\Big)dt \\
        &- \lambda\Big(\nabla_x V (X_{\xi,N}^{i}(t)) + \frac{1}{N}\sum_{i\ne j}^N \nabla_x W_\varepsilon\big(X_{\xi,N}^{i}(t) - X_{\xi,N}^{j}(t)\big)\Big)dt
        ,\end{aligned}\right.
	\end{align}
where $X_{\xi,N}^{i}, V_{\xi,N}^{i}$ denote the position and velocity of the $i$th particles at time $t$, and we assume the initial data $\{ (X_{\xi,N}^{i}(0), V_{\xi,N}^{i}(0))\}_{i=1}^N$ are i.i.d. with the common distribution $f_0$. Here $W_\varepsilon(x)$ is the regularized function of $W(x)$, which is given by
$$W_\varepsilon(x)= C_d(\varepsilon + |x|^2)^{-\frac{d-2}{2}},\qquad C_d=\frac{1}{(d-2)|B(0,1)|},$$ 
and we can directly compute
\begin{align}\label{W}
\|\nabla W_\varepsilon\|_{L^\infty(\R^d)}\le C\varepsilon^{-\frac{d}{2}},\qquad \|D^2 W_\varepsilon\|_{L^\infty(\R^d)}\le C\varepsilon^{-\frac{d+2}{2}}.
\end{align}
The approximated local particle's velocity is
	\begin{eqnarray}\label{u1}
    	u_\xi(X_{\xi,N}^{i}(t) ) = \frac{\frac{1}{N}\sum_{j=1}^N V_{\xi,N}^{j}(t)\phi_2^\delta(V_{\xi,N}^{j}(t))\cdot\phi_1^\varepsilon(X_{\xi,N}^{i}(t) - X_{\xi,N}^{j}(t) )}{\frac{1}{N}\sum_{j=1}^N \phi_1^\varepsilon(X_{\xi,N}^{i}(t) - X_{\xi,N}^{j}(t) ) + \nu}.
    \end{eqnarray}
For each $\varepsilon>0$, $\phi_1$ is the standard mollifier,  the function $\phi_{1}^\varepsilon(x)\in C_c^\infty(\R^d)$ satisfying $\int_{\mathbb{R}^d} \phi_{1}^\varepsilon dx=1,\ spt(\phi_{1}^\varepsilon)\subset B(0,\varepsilon)$. For $\phi_2(v)\in C^2[0,+\infty)$ satisfying
\begin{eqnarray*}
\phi_2(v) =
\begin{cases}
1,& v\le 1,\\
0,& v\ge 2,
\end{cases}
\quad 0\le\phi_2\le1,\ |\phi_2'|\le C\ \mbox{and}\ |\phi_2''|\le C,
\end{eqnarray*}
and taking $\phi_2^\delta(v) = \phi_2(\delta|v|),\ v\in\mathbb{R}^d$,  we have
\begin{align*}
&\parallel v\phi_2^\delta(v) \parallel_{L^\infty(\R^d)} \le 2/\delta,\qquad
\parallel\nabla\big(v\phi_2^\delta(v)\big) \parallel_{L^\infty(\R^d)} \le C.
\end{align*}
We will demonstrate that the proposed particle system (\ref{sde1}) accurately approximates the kinetic equation (\ref{pde1}) in the limit as $N \to \infty$ and $\xi \to 0$.

Firstly, we study the large particle limit. Specifically, with $\xi > 0$ held fixed, we illustrate how the system (\ref{sde1}) approaches an intermediate stochastic system in the limit of $N \to \infty$
\begin{align}\label{sde2}
\left\{\begin{aligned}
d\overline{X}_{\xi}^{i}(t) =& \overline{V}_{\xi}^{i}(t) dt,\\
d\overline{V}_{\xi}^{i}(t) =& \sqrt{2\sigma}dB^i(t)-\gamma \overline{V}_{\xi}^{i}(t)dt
-\beta \Big(\overline{V}_{\xi}^{i}(t)- \overline{u}_\xi\big(\overline{X}_{\xi}^{i}(t)\big)\Big)dt\\
&- \lambda\Big(\nabla_x V(\overline{X}_{\xi}^{i}(t)) + \nabla_x W_\varepsilon* \rho_\xi\big(\overline{X}_{\xi}^{i}(t)\big)\Big)dt,
\end{aligned}\right.
\end{align}
where $\rho_\xi(x)=\int_{\R^d}f_\xi(x,dv)$ and $f_\xi(x,v,t)$ is  the probability density of $(\overline{X}_{\xi}^{i}(t),\overline{V}_{\xi}^{i}(t))$.
The initial data $\{(\overline{X}_{\xi}^i(0), \overline{V}_{\xi}^i(0))\}_{i=1}^N$ have the same distribution as the initial data of (\ref{sde1}). Here $\overline{u}_\xi$ is given by
	\begin{eqnarray}\label{u2}
	\overline{u}_\xi(\overline{X}_{\xi}^{i}(t)) = \frac{\int_{\mathbb{R}^d}v\phi_2^\delta(v)\cdot\phi_1^\varepsilon*f_\xi(\overline{X}_{\xi}^{i}(t)) dv}{\phi_1^\varepsilon * \rho_\xi(\overline{X}_{\xi}^{i}(t)) + \nu}.
    \end{eqnarray}
System \eqref{sde2} is uncoupled, since $\overline{X}_{\xi}^{i}$ depends on $N$ only through the initial datum. 
In Section 3, we present a key contribution of this paper: an error estimate of the mean-field limit through comparing the solutions of the moderately interacting many-particle system, $\{(X_{\xi,N}^i(t), V_{\xi,N}^i(t))_{0 \le t \le T}\}_{i=1}^N$ from (\ref{sde1}), with those of the intermediate system, $\{(\overline{X}_{\xi}^i(t), \overline{V}_{\xi}^i(t))_{0 \le t \le T}\}_{i=1}^N$ from (\ref{sde2}):
\begin{align*}
    \sup\limits_{0<t<T}\sup \limits_{i=1\dots N}  \mathbb{E}\left[|X_{\xi,N}^i(t) - \overline{X}_{\xi}^i(t)|^2+|V_{\xi,N}^i(t) - \overline{V}_{\xi}^i(t)|^2\right] &\leq
\frac{C T^3\ln (N^\alpha)}{N}\big(1+ T^3N^{T^3\alpha }\ln (N^{\alpha})\big)\,,
    \end{align*}
where for fixed $0<\alpha\le 1$,
$\ln (N^\alpha)\sim 1/(\delta^2\nu^4\varepsilon^{4d+2}) $, namely we have chosen $\xi$ to be logarithmically with respect to the number of particles.

The rigorous derivation of mean-field equations from their underlying many-particle systems has been a subject of investigation since the 1980s, with seminal contributions from Sznitman \cite{sznitmantopics,sznitman1984nonlinear}.
To address the singularity presented in the interaction force $W$, the use of a cut-off parameter has become a common approach, as highlighted in works such as \cite{boers2016mean,huang2017error,huang2020mean,lazarovici2017mean}. For the treatment of the local alignment term $u$, a strategy known as moderate interaction is applied within $u_\xi$, resulting in interactions that are more localized compared to those found in the mean-field regime. A closely related work by Oelschläger \cite{oelschlager1984martingale} demonstrated that interacting stochastic particle systems weakly converge to a deterministic nonlinear process as the number of particles increases. He further elucidated the derivation of the porous-medium equation from the limit dynamics of a large interacting particle system \cite{oelschlager1990large}, with subsequent applications found in \cite{jourdain1998propagation,philipowski2007interacting}. In more recent developments, Chen, Göttilich and Knapp \cite{chen2018modeling} applied Oelschläger's approach to rigorously derive error estimates for solutions to stochastic particle systems and their diffusion-aggregation equation limits. Additionally, Chen, Daus and Jüngel \cite{chen2019rigorous} established the mean-field limit for weakly interacting stochastic many-particle systems across multiple population species in the entire space. Finally, Chen, Holzinger, Jüngel and Zamponi \cite{chen2022analysis} executed a mean-field-type limit for stochastic moderately interacting many-particle systems with singular Riesz potential, leading to the emergence of nonlocal porous-medium equations.

Furthermore, for a fixed $\xi > 0$, applying Itô's formula to the intermediate stochastic system yields a regularized version of the equation (\ref{pde1}) for $f_\xi$-the probability density function of the solutions $\overline{X}_{\xi}^i(t)$ to (\ref{sde2}):
	\begin{eqnarray}\label{repde1}
    	&\partial_t f_\xi+v\cdot\nabla_x f_\xi- \nabla_v\cdot[\big(\gamma v+\lambda(\nabla_x V +\nabla_x W_\varepsilon*\rho_\xi)\big)f_\xi]=\nabla_v\cdot[\beta(v-u_\xi)f_\xi+\sigma\nabla_vf_\xi], \\
		&f_\xi(x,v,0) = f_0, \quad (x,v)\in \mathbb R^d\times \mathbb R^d,\nonumber
	\end{eqnarray}
	where
\begin{align}\label{u0}
u_\xi(x,t) = \frac{\int_{\mathbb{R}^d}v\phi_2^\delta(v)\cdot\phi_1^\varepsilon * f_\xi dv}{\nu+\int_{\mathbb{R}^d}\phi_1^\varepsilon * f_\xi dv}.
\end{align} 

Lastly, by taking the limit as $\xi \to 0$ in the regularized equation (\ref{repde1}), we establish that the limit function $f$, derived from $f_\xi$, fulfills the equation (\ref{pde1}). In Section 2, we provide proofs for the existence of solutions to both (\ref{pde1}) and (\ref{repde1}). Our proof methodology draws upon the approaches found in the works of Carrillo et al. \cite{carrillo2020quantitative, carrillo2021quantifying} and Karper et al. \cite{karper2013existence}, yet it introduces a novel perspective through the application of our different regularized equation.

This paper is organized as follows: Section 2 details the establishment of global existence for weak solutions to the kinetic equation (\ref{pde1}) and its regularized counterpart, alongside deriving the convergence relations between their solutions. In Section 3, we demonstrate the existence and uniqueness for the moderately interacting particle system (\ref{sde1}) and the intermediate system (\ref{sde2}). Moreover, this section highlights the main contribution of our study: mean-field limit error estimates between the solutions of the moderately interacting many-particle system and the intermediate system, quantified in terms of expectation.

\section{EXISTENCE OF WEAK SOLUTIONS}

This section focuses on establishing the global existence of weak solutions for the kinetic equation (\ref{pde1}) and its associated regularized problem, drawing inspiration from the methodologies detailed in \cite{carrillo2021quantifying} and \cite{karper2013existence}. In the first subsection, we address the global existence of weak solutions to the regularized problem \eqref{repde1}. Subsequently, the subsection 2.2 demonstrates the derivation of weak solutions for the kinetic equation (\ref{pde1}) by considering the limit as $\xi\to 0$.

We first show the following
lemma, which is quoted from \cite[Lemma 5.3, Lemma 5.4]{carrillo2021quantifying} and \cite[Lemma 2.7]{karper2013existence}.
\begin{lem}\label{lem2.1}
 Let $\{f^n\}_{n\in \mathbb{N}_+}$ and $\{G^n\}_{n\in \mathbb{N}_+}$ be bounded in $L^p_{loc}([0,T]\times\mathbb{R}^{d}\times\mathbb{R}^{d})$ with $1<p<\infty$. If $f^n$ and $G^n$ satisfy
\begin{align}\label{local}
f_t^n + v\cdot\nabla_xf^n= \partial_v G^n,\qquad
f^n|_{t=0} = f^0 \in L^p(\mathbb{R}^{d}\times\mathbb{R}^{d}),
\end{align} and assume that for $r\ge2$,
$$\sup\limits_{n\in\mathbb{N}_+}\| f^n\|_{L^\infty([0,T]\times\mathbb{R}^{d}\times\mathbb{R}^{d})} + \sup\limits_{n\in\mathbb{N}_+}\| (|v|^r+|x|^2)f^n\|_{L^\infty(0,T;L^1(\mathbb{R}^{d}\times\mathbb{R}^{d}))}<\infty.$$
Then for any $\varphi(v)$ such that $|\varphi(v)|\le c(1+|v|)$, the sequence $$ \Big\{ \int_{\mathbb{R}^{d}} f^n\varphi(v)dv \Big\}_{n\in \mathbb{N}_+}$$ is relatively compact in $L^q([0,T]\times\mathbb{R}^{d})$ for any $q\in(1,\frac{d+r}{d+1})$.
\end{lem}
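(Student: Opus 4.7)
The plan is to combine a classical velocity averaging lemma with a truncation argument in the velocity variable to accommodate the linear growth of $\varphi$. First I would introduce a smooth cutoff $\chi_R(v)$ with $\chi_R=1$ on $\{|v|\le R\}$ and $\chi_R=0$ on $\{|v|\ge 2R\}$, and split
\begin{equation*}
\int_{\R^d} f^n\varphi(v)\,dv = \int_{\R^d} f^n\varphi(v)\chi_R(v)\,dv + \int_{\R^d} f^n\varphi(v)(1-\chi_R(v))\,dv.
\end{equation*}
For the first piece, $\psi_R:=\varphi\chi_R$ has compact support in $v$, so a classical velocity averaging lemma for the transport equation \eqref{local} whose right-hand side is in $\partial_v$ form (in the spirit of DiPerna--Lions and Perthame--Souganidis) produces a uniform fractional Sobolev bound $W^{s,p}_{loc}$ for some $s>0$ on the velocity average $\int f^n\psi_R\,dv$ in the $(t,x)$-variables. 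Rellich's theorem then yields compactness in $L^q_{loc}([0,T]\times\R^d)$ for every finite $q$ along a subsequence.

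For the tail piece, the moment hypothesis does the job. Since $r\ge 2$ and $R\ge 1$, on $\{|v|\ge R\}$ one has $1+|v|\le 2R^{-(r-1)}|v|^r$, so
\begin{equation*}
\Big\|\int_{|v|\ge R} f^n\varphi(v)\,dv\Big\|_{L^\infty_t L^1_x}\le CR^{-(r-1)}.
\end{equation*}
On the other hand, the combination of $\|f^n\|_{L^\infty}<\infty$ with the $|v|^r$-moment bound yields, by optimizing the radius of a velocity cutoff, the pointwise estimate
\begin{equation*}
\int_{\R^d} f^n(1+|v|)\,dv \le C\|f^n\|_{L^\infty}^{(r-1)/(d+r)}\Big(\int_{\R^d} f^n|v|^r\,dv\Big)^{(d+1)/(d+r)},
\end{equation*}
and integration in $x$ then bounds the full velocity average in $L^\infty_t L^{(d+r)/(d+1)}_x$ uniformly in $n$. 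Interpolating the $L^\infty_t L^1_x$ tail decay $O(R^{-(r-1)})$ against this global $L^{(d+r)/(d+1)}_x$ control shows that the tail tends to $0$ in $L^q_{t,x}$ as $R\to\infty$, uniformly in $n$, for every $q\in(1,(d+r)/(d+1))$.

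Combining the two estimates, a diagonal extraction yields the claim: pick $R_k\to\infty$, extract a subsequence along which each compactly supported part converges in $L^q_{loc}$ (by Step 2), and observe that the tails are Cauchy in $L^q$ for $q<(d+r)/(d+1)$ (by Step 3). I expect the main obstacle to be identifying the precise velocity averaging statement whose fractional regularity index is genuinely compatible with the merely $L^p_{loc}$ assumption on $f^n$ and $G^n$ and with the $\partial_v$-form of the source; once that input is in place, the exponent range $(1,(d+r)/(d+1))$ emerges naturally from the interpolation governing the tail.
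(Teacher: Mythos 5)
Your decomposition and the use of velocity averaging plus a $v$-tail estimate is the right strategy, and your optimization giving $\int f^n(1+|v|)\,dv\in L^\infty_t L^{(d+r)/(d+1)}_x$ correctly explains the exponent range. However, there is a genuine gap: you never use the hypothesis $\sup_n\||x|^2 f^n\|_{L^\infty_t L^1_{x,v}}<\infty$, and without it the argument cannot close. Step 2 delivers compactness of the truncated average $\int f^n\varphi\chi_R\,dv$ only in $L^q_{loc}([0,T]\times\R^d)$, since velocity averaging gives fractional Sobolev regularity on compact sets and Rellich only gives local compactness. Step 3 controls the $v$-tail globally, but it says nothing about the behavior of the truncated piece for large $|x|$. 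So your diagonal argument produces a Cauchy sequence in $L^q_{loc}$, not in $L^q([0,T]\times\R^d)$, which is what the lemma asserts.

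To repair this you need a spatial tightness estimate, and this is precisely what the $|x|^2$-moment provides. From $\int\!\!\int |x|^2 f^n\,dx\,dv\le C$ one has $\int_{|x|>M}\int f^n\,dv\,dx\le CM^{-2}$. Then splitting $\int_{|x|>M}\int |v|f^n\,dv\,dx$ at $|v|=L$ and choosing $L=M^{2/r}$ gives
\begin{equation*}
\int_{|x|>M}\int (1+|v|)f^n\,dv\,dx \le C\bigl(M^{-2}+M^{2/r-2}+M^{-2(r-1)/r}\bigr)\longrightarrow 0
\end{equation*}
as $M\to\infty$, uniformly in $n$ and $t$ (using $r\ge 2$). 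Interpolating this $L^1_x$ decay against your uniform $L^{(d+r)/(d+1)}_x$ bound, exactly as you interpolate the $v$-tail, shows that $\|\mathbf{1}_{|x|>M}\int f^n\varphi\,dv\|_{L^q_{t,x}}\to 0$ uniformly in $n$ for every $q\in(1,(d+r)/(d+1))$. With that extra ingredient the diagonal extraction is legitimate: local compactness in a large ball, $v$-tail smallness, and $x$-tail smallness together imply relative compactness in $L^q([0,T]\times\R^d)$.

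A secondary caveat, which you yourself flag, is that you should pin down the precise averaging lemma: with $f^n$ uniformly in $L^\infty$ and $G^n$ merely in $L^p_{loc}$ for a fixed $p\in(1,\infty)$, and with the source in $\partial_v$-form, the Perthame--Souganidis / DiPerna--Lions--Meyer results do give some $W^{s,q}_{loc}$ gain in $(t,x)$ for an appropriate $s>0$ after localizing in all variables, but the statement needs the localization in $v$ (which your $\chi_R$ provides) to be cited correctly; as written, this is an assertion rather than an argument.
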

To clarify the limit of $\Big\{ \int_{\mathbb{R}^{d}} f^n\varphi(v)dv \Big\}_{n\in \mathbb{N}_+}$, we further give the following lemma.
\begin{lem}\label{lem2.2}
 For any $r\geq 2$, let $\{f^n\}_{n\in \mathbb{N}_+}$ be a sequence satisfying
 $$\sup\limits_{n\in\mathbb{N}_+}\| f^n\|_{L^\infty([0,T]\times\mathbb{R}^{d}\times\mathbb{R}^{d})} + \sup\limits_{n\in\mathbb{N}_+}\| (|v|^r+|x|^2)f^n\|_{L^\infty(0,T;L^1(\mathbb{R}^{d}\times\mathbb{R}^{d}))}<\infty$$ and
 $$f^n\rightharpoonup f \qquad in\ L^\infty(0,T;L^p(\mathbb{R}^{d}\times\mathbb{R}^{d}))\quad \forall p\in (1,\infty]\,.$$
 Assume that for  any $\varphi(v)$ satisfying $|\varphi(v)|\le c(1+|v|)$, the sequence  $\Big\{ \int_{\mathbb{R}^{d}} f^n\varphi(v)dv \Big\}_{n\in \mathbb{N}_+}$ converges strongly to some $h\in L^q([0,T]\times\mathbb{R}^{d})$ for any $q\in(1,\frac{d+r}{d+1})$. Then it holds that
$$  \int_{\mathbb{R}^{d}} f^n\varphi(v)dv  \to  \int_{\mathbb{R}^{d}} f\varphi(v)dv\qquad  in \ L^q([0,T]\times\mathbb{R}^{d})\quad as\ n\to\infty\,.$$

\end{lem}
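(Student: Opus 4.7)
The proof plan is to identify the strong $L^q$ limit $h$ with $\int_{\mathbb{R}^d} f\varphi(v)\,dv$ by showing that the two coincide as distributions on $(0,T)\times\mathbb{R}^d$, and then invoking uniqueness of distributional limits. Since $\varphi$ has linear growth in $v$, the product $\psi(t,x)\varphi(v)$ with a compactly supported $(t,x)$-test function $\psi$ does not lie in a predual of the space in which $f^n\rightharpoonup f$, so a direct passage to the limit is unavailable. The standard remedy is to truncate $\varphi$ in velocity and control the tail via the higher moment $|v|^r f^n$.

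I would first fix a cutoff $\chi_R\in C_c^\infty(\mathbb{R}^d)$ with $\chi_R\equiv 1$ on $|v|\le R$ and $\mathrm{supp}\,\chi_R\subset B(0,2R)$, and set $\varphi_R:=\varphi\chi_R$. For any $R\ge 1$ and $|v|>R$, since $r\ge 2$ one has $|\varphi(v)|\le c(1+|v|)\le 2c|v|\le 2c|v|^{r}/R^{r-1}$. Hence for every $\psi\in C_c^\infty((0,T)\times\mathbb{R}^d)$,
\begin{align*}
\left|\int_0^T\!\!\int_{\mathbb{R}^d}\psi(t,x)\!\int_{\mathbb{R}^d}f^n\big(\varphi-\varphi_R\big)\,dv\,dx\,dt\right|\le \frac{C\|\psi\|_{L^\infty}\,T}{R^{r-1}}\,\big\|\,|v|^r f^n\big\|_{L^\infty_t L^1_{x,v}},
\end{align*}
so the tail is $O(R^{-(r-1)})$ uniformly in $n$. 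A short lower-semicontinuity argument under $f^n\rightharpoonup f$ transfers the $|v|^r$-moment bound to $f$, giving the same uniform decay for $f$ in place of $f^n$ and, in particular, ensuring that $\int f\varphi\,dv$ is a well-defined locally integrable function.

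Next, for each fixed $R$ the truncated weight $\varphi_R$ is bounded with compact support in $v$, so $\psi(t,x)\varphi_R(v)\in L^1_tL^{p'}_{x,v}$, and the weak (resp.~weak-$*$) convergence $f^n\rightharpoonup f$ in $L^\infty_t L^p_{x,v}$ yields
\begin{align*}
\int_0^T\!\!\int_{\mathbb{R}^d}\psi(t,x)\!\int_{\mathbb{R}^d}f^n\varphi_R\,dv\,dx\,dt\longrightarrow\int_0^T\!\!\int_{\mathbb{R}^d}\psi(t,x)\!\int_{\mathbb{R}^d}f\varphi_R\,dv\,dx\,dt
\end{align*}
as $n\to\infty$. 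Combining with the uniform tail bound and letting $R\to\infty$ after $n\to\infty$ shows $\int_{\mathbb{R}^d} f^n\varphi\,dv\to\int_{\mathbb{R}^d} f\varphi\,dv$ in $\mathcal{D}'((0,T)\times\mathbb{R}^d)$. Since the assumed strong $L^q$ convergence to $h$ also implies convergence in $\mathcal{D}'$, uniqueness of the distributional limit forces $h=\int_{\mathbb{R}^d} f\varphi(v)\,dv$ a.e., and the claimed strong $L^q$ convergence follows. The main technical point I expect is the tail control together with the transfer of moment bounds to $f$; the linear growth of $\varphi$ is precisely absorbed by the gap between $r\ge 2$ and the growth rate $1$, which is what makes $R^{-(r-1)}\to 0$ and underlies the hypothesis $q<(d+r)/(d+1)$ inherited from Lemma~\ref{lem2.1}.
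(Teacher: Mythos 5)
Your proposal is correct and follows the same overall strategy as the paper: truncate $\varphi$ in velocity, control the tail using the $|v|^r$-moment bound, pass to the limit on the truncated piece by weak convergence, and conclude by uniqueness of limits. The one genuine (if modest) difference is where you choose to identify the limit. The paper works directly in the $L^q$--$L^{q'}$ duality, testing against arbitrary $\Psi\in L^{q'}([0,T]\times\mathbb{R}^d)$; this forces a somewhat delicate H\"older/interpolation bound on the tail in which the exponent $r-q-dq/q'$ appears, and positivity of that exponent is exactly the constraint $q<(d+r)/(d+1)$. You instead identify the limit in $\mathcal{D}'((0,T)\times\mathbb{R}^d)$, testing only against $\psi\in C_c^\infty$; since $\psi$ is bounded, your tail estimate reduces to the elementary Chebyshev bound $|\varphi(v)|\le 2c|v|^r/R^{r-1}$ for $|v|>R$, giving $O(R^{-(r-1)})$ uniformly in $n$ without ever invoking the range of $q$. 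The price is that you then need the extra (standard) steps of transferring the moment bound to $f$ by weak lower semicontinuity, and of noting that strong $L^q$ convergence to $h$ implies $\mathcal{D}'$-convergence, so that uniqueness of distributional limits identifies $h=\int f\varphi\,dv$. Both arguments are sound; yours is a slightly more elementary route to the same conclusion, at the cost of one extra identification step.
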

\begin{proof}
Firstly, according to the assumption, it holds that for $q\in(1,\frac{d+r}{d+1})$, 
\begin{equation}\label{weakconve}
f^n \rightharpoonup  f \quad \mbox{ in }L^q([0,T]\times\mathbb{R}^{2d})\,.
\end{equation}
Then we claim that
\begin{equation}
\int_{\R^d}\varphi(v)f^ndv \rightharpoonup  \int_{\R^d}\varphi(v)fdv \quad \mbox{ in }L^q([0,T]\times\mathbb{R}^{d})\,.
\end{equation}

To prove the claim, we consider the text function $\Psi(t,x)\in L^{q'} ([0,T]\times\R^{d})$ with $\frac{1}{q}+\frac{1}{q'}=1$. Then for any $R_\varepsilon>0$ it holds that 
\begin{align*}
&\Big|\int_0^T\int_{\mathbb{R}^{d}}\int_{\mathbb{R}^{d}}f^n\varphi(v)dv\Psi(t,x)dxdt-\int_0^T\int_{\mathbb{R}^{d}}\int_{\mathbb{R}^{d}}f\varphi(v)dv\Psi(t,x)dxdt\Big|\\
\le &\Big|\int_0^T\int_{\mathbb{R}^{d}}\int_{|v|\le R_\varepsilon}( f^n- f)\varphi(v)\Psi(t,x)dvdxdt\Big|+\Big|\int_0^T\int_{\mathbb{R}^{d}}\int_{|v|\ge R_\varepsilon}( f^n- f)\varphi(v)\Psi(t,x)dvdxdt\Big|.
\end{align*}

On the one hand, for any $\varepsilon>0$, there exists some $R_\varepsilon$ sufficiently large such that
\begin{align}\label{term1}
&\Big|\int_0^T\int_{\mathbb{R}^{d}}\int_{|v|\ge R_\varepsilon}( f^n- f)\varphi(v)\Psi(t,x)dvdxdt\Big|^q\nonumber\\
\le
&C\Big\|\Psi(t,x)\Big\|_{L^{q'}([0,T]\times\R^{d})}^q
\Big\|\int_{|v|\ge R_\varepsilon}\varphi(v)(f^n-f)dv\Big\|_{L^q([0,T]\times\R^{d})}^q\nonumber\\
\le&C\int_0^T\int_{\R^d}\Big|\int_{|v|\ge R_\varepsilon}c(1+|v|)|f^n-f|dv\Big|^qdxdt\nonumber\\
\le&C\int_0^T\int_{\R^d}\Big|\int_{|v|\ge R_\varepsilon}(c(1+|v|))^\frac{r}{q}|f^n-f|^\frac{1}{q}\frac{|f^n-f|^\frac{1}{q'}}{(c(1+|v|))^{\frac{r}{q}-1}}dv\Big|^qdxdt\nonumber\\
\le&C\int_0^T\int_{\R^d}\int_{|v|\ge R_\varepsilon}c^r(1+|v|)^r |f^n-f|dv\Big(\int_{|v|\ge R_\varepsilon}\frac{|f^n-f|}{(c(1+|v|))^{\frac{rq'}{q}-q'}}dv\Big)^\frac{q}{q'}dxdt\nonumber\\
\le&\frac{C\|f^n-f\|^{\frac{q}{q'}}_{L^\infty([0,T]\times\mathbb{R}^{2d})}}{R_\varepsilon^{r-q-\frac{dq}{q'}}}\int_0^T\int_{\R^d}\int_{\R^{d}}(1+|v|)^r |f^n-f| dvdxdt\leq \varepsilon,
\end{align}
where $\frac{rq'}{q}-q'>d$ is equivalent to $1<q<\frac{d+r}{d+1}$. 

On the other hand, for fixed $R_\varepsilon>0$, it is easy to check that
\begin{align*}
&\int_0^T\int_{\R^d}\int_{\R^d}|\varphi(v)\textbf{I}_{|v|\leq R_\varepsilon}\Psi(t,x)|^{q'}dvdxdt \notag \\
\leq& \int_0^T \int_{\R^d}c^{q'}(1+|v|)^{q'}\textbf{I}_{|v|\leq R_\varepsilon} dv\int_{\R^d}|\Psi(t,x)|^{q'}dxdt \notag \\
\leq& C(R_\varepsilon,c,d)\|\Psi\|^{q'}_{L^{q'} ([0,T]\times\R^{d})}<\infty.
    \end{align*}
This means that $\varphi(v)\textbf{I}_{|v|\leq R_\varepsilon}\Psi(t,x)$ can be seen as a test function in $L^{q'} ([0,T]\times\R^{d})$, which according to \eqref{weakconve} leads to
\begin{equation}
\Big|\int_0^T\int_{\mathbb{R}^{d}}\int_{|v|\leq R_\varepsilon}( f^n- f)\varphi(v)\Psi(t,x)dvdxdt\Big|\to 0\quad\mbox{ as }n\to\infty.
\end{equation}

This combining with \eqref{term1} concludes that
\begin{align*}
\int_{\R^d}f^n\varphi(v)dv\rightharpoonup \int_{\R^d}f\varphi(v)dv \qquad \mbox{as} \ n\to\infty\quad \mbox{in} \ L^q([0,T]\times\R^{d})
\end{align*}
for any $q\in(1,\frac{d+r}{d+1})$. 

Notice that by relative compactness assumption there exists some $h \in L^q([0,T]\times\mathbb{R}^{d})$ such that $\{ \int_{\mathbb{R}^{d}} f^n\varphi(v)dv \}_{n\in \mathbb{N}_+}$ converges to $h$ strongly. Due to the uniqueness of the limit we have
\begin{align*}
\int_{\R^d}f^n\varphi(v)dv\to h = \int_{\R^d}f\varphi(v)dv \qquad \mbox{as} \ n\to\infty\quad \mbox{in} \ L^q([0,T]\times\R^{d}).
\end{align*}

\end{proof}
\subsection{Solvability of the regularized problem}
As already noted in the introduction, the regularized problem for $f_\xi$ is
\begin{eqnarray}\label{pde2}
    	&\partial_t f_\xi+v\cdot\nabla_x f_\xi- \nabla_v\cdot[\big(\gamma v+\lambda(\nabla_x V +\nabla_x W_\varepsilon*\rho_\xi)\big)f_\xi]=\nabla_v\cdot[\beta(v-u_\xi)f_\xi+\sigma\nabla_vf_\xi], \\
		&f_\xi(x,v,0) = f_0, \quad (x,v)\in \mathbb R^d\times \mathbb R^d,\nonumber
	\end{eqnarray}
	where $u_\xi$ is defined in (\ref{u0}) and has the following form
\begin{align*}
u_\xi(x,t) = \frac{\int_{\mathbb{R}^d}v\phi_2^\delta(v)\cdot\phi_1^\varepsilon* f_\xi dv}{\nu+\int_{\mathbb{R}^d}\phi_1^\varepsilon * f_\xi dv}.
\end{align*}

The purpose of this subsection is to prove existence of weak solutions for regularized problem \eqref{pde2}. The main theorem is as follows.
\begin{thm}\label{thm2.1}
Let $T>0$. Suppose that $f_0$ satisfies
$$ f_0 \in L_+^1\cap L^\infty(\mathbb R^d\times\mathbb{R}^d) \quad and\quad (|v|^2+V+W*\rho_0)f_0 \in L^1(\mathbb R^d\times\mathbb{R}^d).$$
Then for any $\lambda,\beta,\gamma >0$, there exists a weak solution $f_\xi$, $\xi=(\varepsilon,\delta,\nu)$ of Eq.(\ref{pde2}) satisfying
\begin{eqnarray*}
&f_\xi\in L^\infty(0,T;L^p(\mathbb{R}^d\times\R^d)),\quad &\forall p\in[1,\infty],\\
&\rho_\xi\in L^\infty(0,T;L^{q_1}(\mathbb{R}^d)),\quad &\forall q_1\in[1,(d+2)/d),\\
& j_\xi\in L^\infty(0,T; L^{q_2}(\mathbb{R}^d)),\quad &\forall q_2\in[1,(d+2)/(d+1)).
\end{eqnarray*}
And $f_\xi$ satisfies the following entropy inequality
\begin{align}\label{entropy}
&\int_{\mathbb{R}^{2d}}\Big(\frac{|v|^2}{2}+\frac{|x|^2}{2}+V+\sigma |logf_\xi|\Big)f_\xi dxdv 
+\frac{1}{2}\int_{\mathbb{R}^{2d}}W_\varepsilon(x-y)\rho_\xi(x)\rho_\xi(y)dxdy \nonumber\\
\le&-\int_0^T\int_{\mathbb{R}^{2d}}\frac{1}{f_\xi}\Big|\sigma\nabla_vf_\xi-(v-u_\xi)f_\xi\Big|^2dxdvdt+
 \int_{\mathbb{R}^{2d}}\Big(\frac{|v|^2}{2}+\frac{|x|^2}{2}+V+\sigma |logf_0|\Big)f_0 dxdv \nonumber\\
&+\frac{1}{2}\int_{\mathbb{R}^{2d}}W_\varepsilon(x-y)\rho_0(x)\rho_0(y)dxdy + \int_0^T\int_{\mathbb{R}^{2d}}(|v|^2+|x|^2)f_\xi dxdvdt+C,
\end{align}
where $C = C(T)$ is a positive constant depending on $\gamma,\beta,\lambda,T$ and $\|f_0\|_{L^1}$.
\end{thm}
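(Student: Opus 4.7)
The plan is to prove existence by a fixed-point/iteration scheme on the nonlinear local alignment velocity $u_\xi$, following the overall strategy of Karper-Mellet-Trivisa and Carrillo-Choi but exploiting the fact that the regularization parameters $\xi=(\varepsilon,\delta,\nu)$ make $u_\xi$ considerably more tractable. First I would observe that for any $f\geq 0$ with $\int f\,dv<\infty$, the regularized drift defined in \eqref{u0} satisfies the pointwise bound $\|u_\xi\|_{L^\infty}\leq 2/\delta$ (because $|v\phi_2^\delta(v)|\leq 2/\delta$), and moreover $u_\xi(\cdot,t)$ is smooth and Lipschitz in $x$ since $\phi_1^\varepsilon$ is a smooth mollifier and the denominator is bounded below by $\nu>0$. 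Likewise $\nabla_x W_\varepsilon$ is bounded by \eqref{W}. This makes each linearized problem a classical Vlasov-Fokker-Planck equation with bounded Lipschitz coefficients.

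Concretely, I would set up the iterative scheme by letting $f^{n+1}$ be the solution of the linear problem obtained from \eqref{pde2} with $u_\xi$ and $\rho_\xi$ replaced by $u_\xi[f^n]$ and $\rho^n=\int f^n\,dv$. Existence and non-negativity for each $f^{n+1}$ follows from standard parabolic theory for the linearized Fokker-Planck operator. The next step is to derive a priori estimates uniform in $n$: (i) mass conservation $\|f^{n+1}(t)\|_{L^1}=\|f_0\|_{L^1}$ by integrating the equation; (ii) an $L^\infty$ bound via the maximum principle (the drift $\gamma v+\lambda\nabla V+\lambda\nabla W_\varepsilon*\rho^n-\beta(v-u_\xi[f^n])$ has divergence $-(d\gamma+d\beta)$, which produces $\|f^{n+1}(t)\|_{L^\infty}\leq e^{(d\gamma+d\beta)t}\|f_0\|_{L^\infty}$); (iii) the second-moment bounds on $|v|^2 f^{n+1}$ and $|x|^2 f^{n+1}$, obtained by testing with $|v|^2$ and $|x|^2$ and using Gr\"onwall (the $W_\varepsilon$ term is controlled by the $L^\infty$ bound on $\nabla W_\varepsilon$ from \eqref{W}). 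Once (i)-(iii) hold, the classical interpolation $\|\rho\|_{L^{q_1}}\lesssim \|f\|_{L^\infty}^{1-\theta}\||v|^2 f\|_{L^1}^{\theta}$ with $q_1=1+2/d-\eta$ yields the $L^{q_1}$ bound on $\rho^{n+1}$ and analogously for $j^{n+1}=\int v f^{n+1}\,dv$.

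To pass to the limit $n\to\infty$, I would use the compactness results already stated in Lemma~\ref{lem2.1} and Lemma~\ref{lem2.2}: the estimates above provide exactly the hypotheses required with $r=2$, so up to a subsequence $f^n\rightharpoonup f_\xi$ weakly in $L^\infty(0,T;L^p)$ and the velocity averages $\int\varphi(v)f^n\,dv$ converge strongly in $L^q([0,T]\times\mathbb{R}^d)$ for linear-growth $\varphi$. Applying this to $\varphi(v)=1$ and $\varphi(v)=v\phi_2^\delta(v)$ (which is bounded and thus trivially of linear growth) and combining with the mollification by $\phi_1^\varepsilon$ and the strict positivity of the denominator $\nu>0$ yields $u_\xi[f^n]\to u_\xi[f_\xi]$ strongly in $L^q_{t,x}$; the convolution $\nabla W_\varepsilon*\rho^n$ likewise converges strongly. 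This suffices to pass to the limit in every term of the weak formulation. Finally, the entropy inequality \eqref{entropy} is obtained by first establishing the corresponding identity on the level of each $f^n$ (using $\log f^n + \tfrac{|v|^2}{2}+V+\tfrac{|x|^2}{2}$ as a test function modulo a standard truncation-and-smoothing regularization to justify chain-rule computations on $f^n\log f^n$) and then passing to the limit using Fatou's lemma on the dissipation term and weak lower semicontinuity on the left-hand side.

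The principal technical difficulty I anticipate is the rigorous derivation of the entropy inequality: the Fisher-information-like dissipation term $\int f^{-1}|\sigma\nabla_v f-(v-u_\xi)f|^2$ is not obviously defined for a general weak solution, so I would first work with the smoother iterates $f^n$ (where the parabolic regularization from the $\sigma\nabla_v^2$ term guarantees positivity and sufficient regularity to justify multiplying by $\log f^n$), then use the convexity of $\eta\mapsto \eta^{-1}|\cdot|^2$ to pass to the limit via Fatou. A secondary obstacle is ensuring uniformity in $n$ of the $|\log f^n| f^n$ term (negative part of the entropy), which is handled by the standard bound $\int f|\log f|\leq \int f\log f+C(1+\int(|x|^2+|v|^2)f)$, thereby closing the estimate using the already-controlled second moments.
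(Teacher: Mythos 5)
Your overall architecture (linearize in $u_\xi$, iterate, derive $L^1$–$L^\infty$, moment and $L^{q_1}$/$L^{q_2}$ bounds, apply Lemma~\ref{lem2.1}–\ref{lem2.2}, and then close the entropy inequality via Fatou) is the same as the paper's. However, there are two genuine gaps that the paper's extra regularization layer $\eta=(R,\zeta,\xi)$ is designed precisely to avoid, and your proposal dispenses with that layer without filling the resulting holes.

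First, the claim that each linearized problem has \emph{bounded} Lipschitz coefficients is false: with $V(x)=|x|^2/2$ the confinement force is $\nabla_x V(x)=x$, which grows linearly in $x$. It is true that $u_\xi$ is bounded by $2/\delta$ and that $\nabla W_\varepsilon$ is bounded by \eqref{W}, but the drift $\gamma v+\lambda\nabla_x V$ is not bounded in either variable. The paper resolves this by introducing the cutoff $V^R(x)=V(x)M(x/R)$, which makes the linearized drift genuinely bounded and Lipschitz so that the existence result of Karper--Mellet--Trivisa (\cite[Theorem 6.3]{karper2013existence}) applies verbatim; the truncation is removed only in the final step $R\to\infty$, after the a~priori estimates have been shown to be uniform in $R$. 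If you want to avoid the cutoff you would need to invoke an existence theory for linear kinetic Fokker--Planck equations with affine-growth drift (an Ornstein--Uhlenbeck type argument with a Lyapunov function), which is plausible but is not ``classical parabolic theory with bounded coefficients'' and needs to be justified; your sketch is silent on this. (On the other hand, the paper's $\chi_\zeta$ truncation of $u^\eta$ is, as you implicitly note, redundant here since $|u^\eta|\le 2/\delta$ already by the $\phi_2^\delta$ cutoff; that part of the paper's regularization is a harmless inheritance from \cite{karper2013existence} where $u=j/\rho$ is unbounded.)

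Second, and more structurally, a Picard iteration $f^{n}\mapsto f^{n+1}$ combined with extraction of a weakly convergent subsequence does not by itself produce a fixed point of the nonlinear map: if $f^{n_k+1}\rightharpoonup f_\xi$ along a subsequence $\{n_k\}$, there is no reason for $f^{n_k}$ (and hence $u_\xi[f^{n_k}]$, $\rho^{n_k}$) to converge to $f_\xi$ along the \emph{same} subsequence. You would either have to prove a contraction estimate for the iteration (not obviously available for this nonlinearity in the $L^q$ topology), or replace the Picard scheme by a Schauder fixed-point argument. This is exactly what the paper does: it defines the solution operator $\mathcal{T}:\widetilde u\mapsto u^\eta$ on $\mathcal{S}=L^q([0,T]\times\R^d)$, proves that $\mathcal{T}$ is well-defined, continuous and compact (using the velocity-averaging Lemma~\ref{lem2.1} and the identification Lemma~\ref{lem2.2}), and then invokes Schauder. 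Your entropy-inequality strategy has the same issue: at the iterate level the dissipation involves $u_\xi[f^{n}]$ paired with $f^{n+1}$, and the mismatch survives the subsequential limit unless one has a genuine fixed point in hand first. In short, the a~priori estimates and compactness machinery you list are correct and match the paper's, but the passage from the linearized problems to a solution of the nonlinear one must go through Schauder on a truncated ($V^R$) problem, not through subsequential limits of Picard iterates on the untruncated one.
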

\begin{proof}

Step 1: Establish the regularized equation of (\ref{pde2}) 
with
respect to regularization parameter $\eta:=(R, \zeta, \xi)$ as follows
\begin{align}\label{pde3}
\partial_t f^\eta +v\cdot\nabla_x f^\eta- \nabla_v\cdot[\big(\gamma &v+\lambda(\nabla_x V^R +\nabla_x W_\varepsilon*\rho^\eta)\big)f^\eta]\nonumber\\
&= \nabla_v\cdot[\beta(v-\chi_\zeta(u^\eta))f^\eta+\sigma\nabla_vf^\eta],
\end{align}
subject to initial data
\begin{align*}
&f_0^\eta=f^\eta(x,v,0):=\begin{cases}
		f_0,& |v|\le\zeta,\\
		0,& |v|>\zeta.
		\end{cases}
\end{align*}
Where $\rho^\eta = \rho^\eta(x,t)$ and $u^\eta = u^\eta(x,t)$ are given by
$$\rho^\eta:=\int_{\mathbb{R}^d} f^\eta dv, \quad
u^\eta := \frac{k^\eta}{\nu+\phi_1^\varepsilon *\rho^\eta},
$$
and $$k^\eta(x,t) =\int_{\mathbb{R}^d}v\phi_2^\delta(v)\cdot\phi_1^\varepsilon* f^\eta dv,
$$ 
for convenience, we assume that
$$j^\eta(x,t):=\int_{\mathbb{R}^d} vf^\eta dv.$$
Moreover, $\chi_\zeta$, $V^R$ are defined by
\begin{eqnarray*}
\chi_\zeta(u) =
\begin{cases}
u,& |u|\le\zeta,\\
0,& |u|>\zeta,
\end{cases}
\qquad V^R(x):=V(x)M(\frac{x}{R}),
\end{eqnarray*}
where $M(x) \in \mathcal{C}_c^\infty(\mathbb{  R}^d)$ is a smooth function defined by
\begin{eqnarray*}
M(x) =
\begin{cases}
1,& |x|< 1,\\
0,& |x|>2,
\end{cases}
\ \mbox{and} \quad
0\le M(x)\le1. 
\end{eqnarray*}
\\
Step 2:  In this step, we prove existence of weak solutions to (\ref{pde3}).

Firstly, similarly as in Ref.\cite{karper2013existence}, we partially linearize (\ref{pde3}) as follows
	\begin{align}\label{pde4}
    	\partial_t f^\eta +v\cdot\nabla_x f^\eta- \nabla_v\cdot[\big(\gamma& v+\lambda(\nabla_x V^R +\nabla_x W_\varepsilon*\rho^\eta)\big)f^\eta]\nonumber\\
		&=\nabla_v\cdot[\beta(v-\chi_\zeta(\widetilde{u}))f^\eta+\sigma\nabla_vf^\eta],
	\end{align}
where $\widetilde{u}$ is in $\mathcal{S} = L^2([0,T]\times\mathbb{R}^d)$. Since for any fixed $\varepsilon>0$, $\nabla_x W_\varepsilon$ is bounded and Lipschitz
continuous from (\ref{W}), existence of weak solutions $f^\eta$ to (\ref{pde4}) comes from almost the same
argument in Ref.\cite[Theorem 6.3]{karper2013existence}, and $f^\eta$ satisfies
\begin{align}\label{f1}
\parallel f^\eta\parallel_{L^\infty(0,T;L^p(\mathbb{R}^{2d}))}
+
\sigma\parallel\nabla_v (f^\eta)^\frac{p}{2}\parallel_{L^2([0,T]\times\mathbb{R}^{2d})}^2
\le e^{CT}
\parallel f_0^\eta\parallel_{L^p(\mathbb{R}^{2d})}
\end{align}
for any $p\in[1,\infty)$, where $C$ is a constant independent of $p, \eta$. In particular, we have for $t\in[0,T]$,
\begin{align}\label{f_inf}
\|f^\eta(\cdot,\cdot,t)\|_{L^1(\R^{2d})}\le\|f_0\|_{L^1(\R^{2d})},\qquad \|f^\eta(\cdot,\cdot,t)\|_{L^\infty(\R^{2d})}\le e^{CT}\|f_0\|_{L^\infty(\R^{2d})}.
\end{align}
Its velocity moment satisfies the
following boundedness estimate from Ref.\cite[Lemma 5.1]{carrillo2021quantifying},
\begin{align}\label{moment}
\sup\limits_{t\in(0,T)}\int_{\mathbb{R}^{d}}\int_{\mathbb{R}^{d}}|v|^N f^\eta dvdx\le C \qquad\forall\  N\ge 0,
\end{align}
where $C$ is a constant independent of $R, \zeta$.
\\

Then, we define the mapping $\mathcal{T}:\mathcal{S}\to\mathcal{S}$, where $\mathcal{S} = L^q([0,T]\times\mathbb{R}^d),q\in[1,\frac{d+2}{d+1})$ by
\begin{align}\label{T}
\widetilde{u}\to \mathcal{T}(\widetilde{u}):= u^\eta(x,t) = \frac{k^\eta}{\nu+\phi_1^\varepsilon * \rho^\eta }.
\end{align}
In the following, we show $\mathcal{T}$ is well-defined. Noticing the definition of $\phi_2^\delta$ and using the $\mathrm{H\ddot{o}lder}$ inequality, it holds for each $\delta>0$,
\begin{align*}
\|k^\eta\|_{L^q([0,T]\times{\mathbb{R}^{d}})}^q
&=\int_0^T\int_{\mathbb{R}^d} \Big|\int_{|v|<\frac{2}{\delta}}v\phi_1^\varepsilon * f^\eta dv\Big|^q dxdt\\
&\le\int_0^T\int_{\mathbb{R}^d}\Big(\int_{|v|<\frac{2}{\delta}} 1 dv \Big)^{q-1}\int_{|v|<\frac{2}{\delta}}|\phi_1^\varepsilon * vf^\eta|^q dv dxdt\\
&\le \frac{C(d)}{\delta^{d(q-1)}} \int_0^T\int_{\mathbb{R}^d} \int_{|v|<\frac{2}{\delta}}\Big|\int_{\mathbb{R}^d}\phi_1^\varepsilon(x-y) \cdot vf^\eta(y,v)dy\Big|^q dvdxdt.
\end{align*}
Moreover, we have
\begin{align}\label{k1}
\|k^\eta\|_{L^q([0,T]\times{\mathbb{R}^{d}})}^q
&\le \frac{C(d)}{\delta^{d(q-1)}} \int_0^T\int_{\mathbb{R}^d} \int_{|v|<\frac{2}{\delta}}\Big(\int_{\mathbb{R}^d}\phi_1^\varepsilon(x-y)dy\int_{\mathbb{R}^d} \phi_1^\varepsilon(x-y)(vf^\eta(y,v))^qdy\Big)dvdxdt\nonumber\\
&\le \frac{C(d)}{\delta^{d(q-1)}}
\int_0^T\int_{\mathbb{R}^{d}}
\Big(\int_{\mathbb{R}^{d}}\phi_1^\varepsilon(x-y)dx\int_{|v|<\frac{2}{\delta}} |v|^qf^\eta(y,v)^qdv\Big)dydt\nonumber\\
&\le \frac{C(d)}{\delta^{d(q-1)}}\| f^\eta\|_{L^\infty([0,T]\times\R^{2d})}^{q-1}\int_0^T\int_{\mathbb{R}^{d}} \int_{\mathbb{R}^{d}}|v|^qf^\eta(y,v)dvdydt.
\end{align}
By the velocity moment estimate (\ref{moment}), we obtain the boundedness of $k^\eta$ in $ L^q([0,T]\times\mathbb{R}^{d})$. Thus we have
\begin{align}\label{u}
\|u^\eta\|_{L^q([0,T]\times {\mathbb{R}^{d}})} &=\Big\|\frac{k^\eta}{\nu+\phi_1^\varepsilon*\rho^\eta}\Big\|_{L^q([0,T]\times {\mathbb{R}^{d}})}
\le\frac{1}{\nu}\|k^\eta\|_{L^q([0,T]\times {\mathbb{R}^{d}})}<C\quad \forall t\in[0,T],
\end{align}
where $C$ is a constant independent of $R, \zeta$.
\\

Next, we prove the compactness of $\mathcal{T}$. Similar to \eqref{T}, define ${u^\eta_m}$ by
$$\widetilde{u}_m\to \mathcal{T}(\widetilde{u}_m) := u^\eta_m(x,t) = \frac{\int_{\mathbb{R}^d}v\phi_2^\delta(v)\cdot\phi_1^\varepsilon * f^\eta_m dv}{\nu+\int_{\mathbb{R}^d}\phi_1^\varepsilon * f^\eta_m dv}\quad \forall m\in \mathbb{N}_+,$$ 
where $\{\tilde{u}_m\}_{m\in \mathbb{N}_+}$ is a bounded and compact sequence in $\mathcal{S} = L^q([0,T]\times\mathbb{R}^d)$.
It is similar to
discuss the compactness of $\mathcal{T}$ i.e.
the convergence of $\{u^\eta_m\}_{m\in \mathbb{N}_+}$ in  Ref.\cite[Lemma 3.5]{karper2013existence}, we present here for readers'convenience. Set
\begin{align}\label{G}
&G_m := \Big(\gamma v+\lambda(\nabla_x V^R +\nabla_x W_\varepsilon*\rho^\eta_m)+ \beta(v-\chi_\zeta(u^\eta_m))\Big)f^\eta_m+\sigma\nabla_vf^\eta_m.
\end{align} 
Now we verify that  $\{G_m\}_{m\in \mathbb{N}_+}$ is bounded in $L^\infty(0,T;L_{loc}^p(\R^{2d})),p\in[1,2]$. In fact, the $\mathrm{H\ddot{o}lder}$ inequality gives that for $t\in[0,T]$,
\begin{align*}
\|G_m\|_{L^\infty(0,T;L_{loc}^p(\R^{2d}))} \le &(\gamma +\beta) \|vf^\eta_m\|_{L^\infty(0,T;L^p(\R^{2d}))}+\sigma\|\nabla_vf^\eta_m\|_{L^\infty(0,T;L^p(\R^{2d}))}\\
&+\big(C\lambda(1+\frac{1}{R} )+C\lambda\varepsilon^{-\frac{d}{2}}+ \beta\zeta\big)\|f^\eta_m\|_{L^\infty(0,T;L^p(\R^{2d}))},
\end{align*}
where we use that in a bounded
region $\mathcal{D}\subset\R^d\times\R^d$, 
\begin{align*}
|\nabla_x V^R|&= \Big|\nabla V(x)M(\frac{x}{R}) +\frac{1}{R}V(x)\nabla M(\frac{x}{R})\Big|\\
&\le |\nabla V(x)|+ \frac{\parallel\nabla M\parallel_{L^\infty(\mathcal{D})}}{R}|V(x)|\le C(1+\frac{1}{R}),
\end{align*}
and for $t\in[0,T]$,
\begin{align*}
&\| \nabla_x W_\varepsilon*\rho^\eta_m\|_{L^\infty(\R^d)}\le\varepsilon^{-\frac{d}{2}}\|\rho^\eta_m\|_{L^1({\R^{d}})}
\le C \varepsilon^{-\frac{d}{2}},\qquad
\|\chi_\zeta(\tilde{u}_m)\|_{L^\infty({\R^{d}})}\le\zeta.
\end{align*}
It remains to bound the term involving $\nabla_v f^\eta_m$. By the $\mathrm{H\ddot{o}lder}$ inequality , we get that $\forall p\in[1,2)$,
\begin{align*}
\int_{\R^{d}}\int_{\R^{d}}|\nabla_vf^\eta_m|^pdvdx
&=
\int_{\R^{d}}\int_{\R^{d}}(f^\eta_m)^\frac{p}{2}(f^\eta_m)^{-\frac{p}{2}} |\nabla_v f^\eta_m|^p dvdx\nonumber\\
&\le 
\|(f^\eta_m)^\frac{p}{2}\|_{L^\frac{2}{2-p}(\R^{2d})}\Big(\int_{\R^{d}}\int_{\R^{d}}\frac{1}{f^\eta_m} |\nabla_v f^\eta_m|^2 dvdx\Big)^\frac{p}{2},\ 
\end{align*}
and when $p=2$,
\begin{align*}
\int_{\R^{d}}\int_{\R^{d}}|\nabla_vf^\eta_m|^2dvdx
&\le 
\|f^\eta_m\|_{L^\infty(\R^{2d})}\int_{\R^{d}}\int_{\R^{d}}\frac{1}{f^\eta_m} |\nabla_v f^\eta_m|^2 dvdx.
\end{align*}
Here (\ref{f1}) provides the following estimate 
\begin{align*}
\int_{\R^{d}}\int_{\R^{d}} \frac{1}{f^\eta_m}|\nabla_vf^\eta_m|^2dvdx
=\int_{\R^{d}}\int_{\R^{d}}4 |\nabla_v(f^\eta_m)^\frac{1}{2}|^2dvdx\le C,
\end{align*} 
where $C$ is a constant independent of $R, \zeta$. Thus, for $t\in[0,T]$,
\begin{align}\label{td}
\int_{\R^{d}}\int_{\R^{d}}|\nabla_vf^\eta_m|^pdvdx
\le C\qquad \forall p\in[1,2]. 
\end{align}
By (\ref{f1}), (\ref{moment}) and (\ref{td}), we have  
\begin{align}\label{Gm}
\|G_m\|_{L^\infty(0,T;L^p_{loc}(\mathbb{R}^{2d}))}\le C\qquad\ \forall  p\in[1,2].
\end{align}
By the uniform estimates of $f^\eta_m$,  $|v|f^\eta_m$ and $G_m$ in (\ref{f1}),  (\ref{moment}) and (\ref{Gm}), we know that 
there is a subsequence of $f^\eta_m$ (without relabeling for convenience) and $f^\eta\in L^\infty (0,T;L^p(\R^{2d}))$ such that as $m\to \infty$
\begin{equation*}
f^\eta_m \rightharpoonup  f^\eta \quad \mbox{ in }L^\infty (0,T;L^p(\R^{2d}))\qquad\ \forall  p\in[1,2].
\end{equation*}
Moreover, applying Lemma \ref{lem2.1} gives  
for any $\varphi(v)$ satisfying $|\varphi(v)|\le c(1+|v|)$, the sequence  $\Big\{ \int_{\mathbb{R}^{d}} f^\eta_m\varphi(v)dv \Big\}_{m\in \mathbb{N}_+}$ converges strongly to some $h\in L^q([0,T]\times\mathbb{R}^{d})$ for any $q\in(1,\frac{d+2}{d+1})$. Thus, in Lemma \ref{lem2.2} (with $\varphi(v) = 1$ and $\varphi(v)=v\phi_2^\delta(v)$), we show the following strong convergences in $L^q([0,T]\times\mathbb{R}^d)$,
\begin{align}\label{stc1}
&\int_{\R^d}f^\eta_m dv\to \int_{\R^d}f^\eta dv,\
\int_{\R^d}v\phi_2^\delta(v)f^\eta_m dv\to \int_{\R^d}v\phi_2^\delta(v)f^\eta dv \qquad \mbox{as}\ m\to \infty.
\end{align} 
Without relabeling for convenience, we also can get the following convergent subsequences
\begin{align*}
&\int_{\R^d}f^\eta_m dv\to \int_{\R^d}f^\eta dv,\
\int_{\R^d}v\phi_2^\delta(v)f^\eta_m dv\to \int_{\R^d}v\phi_2^\delta(v)f^\eta dv \qquad \ \mbox{as}\ m\to \infty\ \ \mbox{a.e.}.
\end{align*} 

Moreover, we give the convergence of $\{u^\eta_m\}_{m\in \mathbb{N}_+}$ up to a subsequence in $L^q((0,T)\times\mathbb{R}^d)$. Consider 
\begin{align}\label{up}
|u^\eta_m- u^\eta|
&=\Big|\frac{ k^\eta_m\big(\nu+\phi_1^\varepsilon* \rho^\eta \big)-k^\eta \big(\nu+ \phi_1^\varepsilon*\rho^\eta_m\big) }{\big(\nu+ \phi_1^\varepsilon*\rho^\eta_m \big)\big(\nu+ \phi_1^\varepsilon*\rho^\eta\big)}\Big|\nonumber\\
&= \Big|\frac{\nu\big(k^\eta_m- k^\eta\big) +  \phi_1^\varepsilon*\rho^\eta\big(k^\eta_m- k^\eta\big)-\phi_1^\varepsilon*(\rho^\eta_m -\rho^\eta\big) k^\eta}{(\nu+ \phi_1^\varepsilon*\rho^\eta_m)(\nu+ \phi_1^\varepsilon*\rho^\eta)}\Big|\nonumber\\
&\le \frac{2}{\nu}| k^\eta_m-k^\eta| +\Big|\frac{\phi_1^\varepsilon*\big(\rho^\eta_m -\rho^\eta\big)k^\eta}{(\nu+ \phi_1^\varepsilon*\rho^\eta_m)(\nu+ \phi_1^\varepsilon*\rho^\eta)}\Big|.
\end{align}
For the first term in the right hand side of (\ref{up}), using (\ref{stc1}) gives that 
\begin{align}\label{k}
\|k^\eta_m-k^\eta\|_{L^q([0,T]\times\mathbb{R}^d)}
&=\Big\|\int_{\mathbb{R}^d}v\phi_2^\delta(v)\cdot\phi_1^\varepsilon*( f^\eta_m -f^\eta) dv\Big\|_{L^q([0,T]\times\mathbb{R}^d)}\nonumber\\
&\le \Big\|\int_{\mathbb{R}^d}v\phi_2^\delta(v)(f^\eta_m -f^\eta)dv\Big\|_{L^q([0,T]\times\mathbb{R}^d)}\to 0\quad \mbox{as}\ m\to \infty.
\end{align} 
Then, from (\ref{k1}), we have
$$\Big|\frac{\big(\phi_1^\varepsilon*\rho^\eta_m -\phi_1^\varepsilon*\rho^\eta\big)k^\eta}{(\nu+ \phi_1^\varepsilon*\rho^\eta_m)(\nu+ \phi_1^\varepsilon*\rho^\eta)}\Big|\le \frac{1}{\nu}|k^\eta|+\frac{1}{\nu}|k^\eta|=\frac{2}{\nu}|k^\eta|\in L^q([0,T]\times\mathbb{R}^d),$$
and using convergence of $\{\rho^\eta_m\}_{m\in \mathbb{N}_+}$, we find
$$\Big|\frac{\phi_1^\varepsilon*\big(\rho^\eta_m -\rho^\eta\big)k^\eta}{(\nu+ \phi_1^\varepsilon*\rho^\eta_m)(\nu+ \phi_1^\varepsilon*\rho^\eta)}\Big|\to 0\qquad \mbox{as}\ m\to\infty\quad \mbox{a.e.}.$$
Thus by the Lebesgue dominated convergence theorem, we have
\begin{align}\label{lastterm}
\lim\limits_{m\to\infty}\int_0^T\int_{\R^d}\Big|\frac{\phi_1^\varepsilon*\big(\rho^\eta_m -\rho^\eta\big)k^\eta}{(\nu+ \phi_1^\varepsilon*\rho^\eta_m)(\nu+ \phi_1^\varepsilon*\rho^\eta)}\Big|^qdxdt = 0.
\end{align}
Substitute (\ref{k}) and (\ref{lastterm}) into (\ref{up}), we can obtain
$$
u^\eta_m\to u^\eta\qquad \mbox{in}\ L^q([0,T]\times\mathbb{R}^d)\quad \mbox{as}\ m\to\infty.
$$
To sum up, we finish the proof of compactness to $\mathcal{T}$.
\\ 

Since the operator $\mathcal{T}$ is well-defined, continuous and compact, we can use the Schauder’s fixed point theorem to obtain weak solutions of equation (\ref{pde3}) and the following entropy inequality in Ref.\cite[Subsection 5.1]{carrillo2021quantifying},
\begin{align}\label{entropy1}
\int_{\R^{2d}}&\Big(\frac{|v|^2}{2}+\frac{|x|^2}{2}+V^R+\sigma |logf^\eta|\Big)f^\eta dxdv 
+\frac{1}{2}\int_{\mathbb{R}^{2d}}W_\varepsilon(x-y)\rho^\eta(x)\rho^\eta(y)dxdy\nonumber\\
&+\int_0^T\int_{\mathbb{R}^{2d}}\frac{1}{f^\eta}\Big|\sigma\nabla_vf^\eta-(v-\chi_\zeta(u^\eta))f^\eta\Big|^2dxdvdt
\nonumber\\
\le& \int_{\mathbb{R}^{2d}}\Big(\frac{|v|^2}{2}+\frac{|x|^2}{2}+V^R+\sigma |logf_0^\eta|\Big)f_0^\eta dxdv+\sigma dT\|f_0\|_{L^1} \nonumber\\
&+\frac{1}{2}\int_{\mathbb{R}^{2d}}W_\varepsilon(x-y)\rho_0^\eta(x)\rho_0^\eta(y)dxdy + \int_0^T\int_{\mathbb{R}^{2d}}(|v|^2+|x|^2)f^\eta dxdvdt +C,
\end{align}
where $C$ is a constant independent of $R, \zeta$.
\\

Step 3: we prove the existence of  weak solutions to (\ref{pde2}). 

It remains to prove the convergence as $R, \zeta\to \infty$. Set $R = \zeta$ and we tend $R$ to infinity.  (Indeed, although $f^\eta$ is only integrable, it can be approached by a function in $\mathcal{C}_c^\infty([0,T]\times \mathbb{R}^{2d})$. Multiplying by $p(f^\eta)^{p-1}, p\ge 1$ on both side of
equation (\ref{pde3}) and integrating on $\mathbb{R}^d\times\mathbb{R}^d$
leads to
\begin{align*}
\frac{d}{dt}&\int_{\R^d}\int_{\R^d} (f^\eta)^pdxdv\\
=&(p-1)\int_{\R^d} \int_{\R^d}(f^\eta)^p\nabla_v\cdot\Big(\gamma v+\lambda(\nabla_x V^R +\nabla_x W_\varepsilon*\rho^\eta)+ \beta(v-\chi_R(u^\eta))\Big)dxdv\\
&-\sigma p(p-1)\int_{\R^d}\int_{\R^d} (f^\eta)^{p-2}|\nabla_v f^\eta|^2dxdv\\
=& d(\gamma+\beta)(p-1)\int_{\R^d}\int_{\R^d} (f^\eta)^p dxdv -\frac{4\sigma (p-1)}{p}\int_{\R^d}\int_{\R^d} |\nabla_v (f^\eta)^{p/2}|^2dxdv.
\end{align*} 
This together with  $\mathrm{Gr\ddot{o}nwall}$’s lemma gives that for $p\in[1, \infty)$, it holds
\begin{align}\label{feta}
\| f^\eta&\|_{L^\infty(0,T; L^p(\R^{2d}))}^p + \frac{4\sigma (p-1)}{p} \int_0^T \|\nabla_v(f^\eta
)^{p/2}(\cdot,\cdot,t)\|_{L^2(\R^{2d})}^2dt\le \| f_0\|_{L^p(\R^{2d})}^pe^{{d(\gamma+\beta)(p-1)}T}.
\end{align}
In particular, we have for any $t\in[0,T]$,
\begin{align}\label{f_inf1}
\|f^\eta(\cdot,\cdot,t)\|_{L^1(\R^{2d})}\le\|f_0\|_{L^1(\R^{2d})},\qquad \|f^\eta(\cdot,\cdot,t)\|_{L^\infty(\R^{2d})}\le e^{Ct}\|f_0\|_{L^\infty(\R^{2d})}.
\end{align}
Moreover, we use $\mathrm{Gr\ddot{o}nwall}$’s lemma to yield, for $t\in[0,T]$, by the entropy inequality (\ref{entropy1}) we can obtain
\begin{align}\label{2m}
\int_{\R^d}\int_{\R^d} \Big(\frac{|v|^2}{2}+\frac{|x|^2}{2}\Big)f^\eta dxdv\le C(T).
\end{align}
From \cite[Lemma 2.4]{karper2013existence}, we know that
\begin{align}
&\parallel \rho^\eta\parallel_{L^\infty(0,T;L^{q_1}(\mathbb{R}^d))}\le C\qquad \forall q_1\in [1,(d+2)/d),\label{rhoeta1}\\
&\parallel j^\eta\parallel_{L^\infty(0,T;L^{q_2}(\mathbb{R}^d))}\le C\qquad \forall q_2\in[1, (d+2)/(d+1)),\label{rhoeta2}
\end{align}
where $C$ is a constant independent of $\eta$.
Thus, by (\ref{feta}), (\ref{rhoeta1}) and (\ref{rhoeta2}), we have that there exist subsequences of $f^\eta$, $\nabla_v f^\eta$, $\rho^\eta$ and $j^\eta$, without relabeling for convenience, we have the following weak convergences as $R\to\infty$
\begin{align}
& f^\eta\rightharpoonup f_\xi\qquad \mbox{in}\ L^\infty(0,T;L^p(\mathbb{R}^{2d}))\quad \forall p\in[1,\infty],\label{f11}\\
& \rho^\eta\rightharpoonup\rho_\xi \qquad  \mbox{in}\ L^\infty(0,T;L^{q_1}(\mathbb{R}^d))\quad \forall q_1\in[1,(d+2)/d),\\
&  j^\eta
\rightharpoonup j_\xi \qquad  \mbox{in}\ L^\infty(0,T; L^{q_2}(\mathbb{R}^d))\quad \forall q_2\in[1,(d+2)/(d+1)),\label{j111}\\
&\qquad \nabla_vf^\eta\rightharpoonup \nabla_v f_\xi \qquad  \mbox{in}\  L^2([0,T]\times\R^{2d}).\label{f111}
\end{align}

Next, we define $G^\eta$ as
\begin{align}\label{G11}
G^\eta:=\Big(\gamma v+\lambda(\nabla_x V^R +\nabla_x W_\varepsilon*\rho^\eta)+ \beta(v-\chi_R(u^\eta))\Big)f^\eta+\sigma\nabla_vf^\eta.
\end{align}
Just like the estimate to (\ref{G}) in Step 2,  the only
additional difficulty is to bound uniformly the term $\chi_R(u^\eta)f^\eta$. For $t\in[0,T]$,
\begin{align*}
\|\chi_R(u^\eta) f^\eta\|_{L^p(\R^{2d})}
&\le\|u^\eta f^\eta\|_{L^p(\R^{2d})}
\le
\|u^\eta\|_{L^p(\R^d)} \|f^\eta\|_{L^\infty(\R^{2d})}\\
&\le\frac{1}{\nu}
\Big\|\int_{\mathbb{R}^{d}}v\phi_2^\delta\cdot\phi_1^\varepsilon*f^\eta dv\Big\|_{L^p(\R^d)} \|f^\eta\|_{L^\infty(\R^{2d})}\\
&\le \frac{1}{\nu}\|\int_{\mathbb{R}^{d}}v\phi_2^\delta\cdot f^\eta dv\Big\|_{L^p(\R^d)}\|f^\eta\|_{L^\infty(\R^{2d})}.
\end{align*}
Here by the definition of $\phi_2^\delta$, we have
\begin{align*}
\int_{\mathbb{R}^{d}}v\phi_2^\delta f^\eta dv 
&\le \int_{\mathbb{R}^{d}}(1+|v|) f^\eta dv 
\le \Big(\int_{\mathbb{R}^{d}}(1+|v|)^2 f^\eta dv \Big)^\frac{1}{p} \Big(\int_{\mathbb{R}^{d}}\frac{f^\eta}{(1+|v|)^{2q/p-q}}dv \Big)^\frac{1}{q}.
\end{align*}
Noticing that $p\in [1,(d+2)/(d+1))$ implies $2q/p-q>d$, we have 
\begin{align*}
\int_{\mathbb{R}^{d}}v\phi_2^\delta f^\eta dv 
\le C\|f^\eta\|_{L^\infty(\R^{2d})}^{1/q}\Big(\int_{\mathbb{R}^{d}}(1+|v|)^2 f^\eta dv \Big)^{1/p},
\end{align*}
which indicates that
\begin{align}\label{bd1}
\Big\|\int_{\mathbb{R}^{d}}v\phi_2^\delta f^\eta dv\Big\|_{L^p(\R^d)} \le  C\int_{\R^d}\int_{\R^d}(1+|v|)^2 f^\eta dvdx. 
\end{align}
Thus, by \eqref{2m} and \eqref{bd1}, we get $$\|\chi_R(u^\eta) f^\eta\|_{L^\infty(0,T;L^p(\R^{2d}))}\le C,$$ 
where $C$ is a constant independent of $R, \varepsilon, \delta$. The boundednesses of others are as in the previous estimates. So we have $G^\eta$ in $L^\infty(0,T;L^p_{loc}(\R^{2d}))$ for all $p\in[1,(d+2)/(d+1))$. Thus, we can set $r = 2$ and apply Lemma \ref{lem2.2} to show that the following strong convergences for $q\in(1,(d+2)/(d+1))$,
\begin{align*}
&\int_{\R^d}f^\eta dv\to \int_{\R^d}f_\xi dv,\quad
\int_{\R^d}v\phi_2^\delta f^\eta dv\to \int_{\R^d}v\phi_2^\delta f_\xi dv\qquad \mbox{in}\ L^q([0,T]\times\mathbb{R}^d)\ \mbox{as}\ R\to\infty,
\end{align*} 
and we consider
\begin{align*}
\|k^\eta-k_\xi\|_{L^q([0,T]\times\R^d)}
&=\Big\|\int_{\mathbb{R}^d}v\phi_2^\delta(v)\cdot\phi_1^\varepsilon(x)*(f^\eta-f_\xi) dv\Big\|_{L^q([0,T]\times\R^d)}\\
&\le \Big\|\int_{\mathbb{R}^d}v\phi_2^\delta(v)(f^\eta-f_\xi) dv\Big\|_{L^q([0,T]\times\R^d)}
\to 0\quad \mbox{as}\ R\to \infty.
\end{align*}
Without relabeling for convenience, we can get the following convergent subsequences for $q\in(1,(d+2)/(d+1))$, 
\begin{align}\label{sc1}
\rho^\eta\to\rho_\xi,\  k^\eta\to k_\xi\qquad \mbox{in}\ L^q([0,T]\times\mathbb{R}^d)\qquad \mbox{as}\ R\to \infty\quad\mbox{a.e.}.
\end{align}

Hence, refer to \cite[(5.7)]{carrillo2021quantifying}, we have
$$(\nabla W_\varepsilon*\rho^\eta)f^\eta\rightharpoonup (\nabla W_\varepsilon*\rho_\xi)f_\xi,
\qquad \qquad  \chi_R(u^\eta) f^\eta \rightharpoonup u_\xi f_\xi$$
as $R\to \infty$ and the processes of proofs used (\ref{sc1}). By combining the weak convergences of $f^\eta$, $vf^\eta$ and $\nabla_v f^\eta$ in (\ref{f11})-(\ref{f111}), we conclude that
$f_\xi$ is a weak solution of (\ref{pde2}) in the following weak sense
\begin{align}\label{ws}
	&\int_0^T\int_{\R^d}\int_{\R^d} -f_\xi\varphi_t - vf_\xi\nabla_x\varphi + \big[\big(\gamma v+\lambda(\nabla_x V +\nabla_x W_\varepsilon*\rho_\xi)\big)f_\xi\big]\nabla_v\varphi dvdxdt \nonumber\\
	&+ \int_0^T\int_{\R^d}\int_{\R^d} \big(\beta(v-u_\xi)f_\xi + \sigma\nabla_v f_\xi\big)\nabla_v\varphi 
	dvdxdt = \int_{\R^d}\int_{\R^d} f_0\varphi(0,\cdot)dvdx
\end{align}
for any $\varphi\in \mathcal{C}_c^\infty([0,T]\times \mathbb{R}^{2d})$,
 and $f_\xi$ satisfies the entropy inequality \eqref{entropy} as Ref.\cite[(5.6)]{carrillo2021quantifying}.
\end{proof}

Now, we provide uniform estimates respect to $\varepsilon,\delta$ for the solutions of the model \eqref{pde2}.
\begin{lem}\label{lem2.3}
Let $f_0$ be in Theorem \ref{thm2.1}, $f_\xi$ is a weak solution of the regularized problem (\ref{pde2}). Then for any $\lambda,\beta,\gamma >0$, there are following uniform estimates
\begin{align*}
&\parallel f_\xi\parallel_{L^\infty(0,T;L^p(\mathbb{R}^{d}\times\mathbb{R}^d))} 
+\parallel \nabla_vf_\xi^{p/2}\parallel_{L^2([0,T]\times\mathbb{R}^{d}\times\mathbb{R}^d)}\\
&\qquad+\parallel \rho_\xi\parallel_{L^\infty(0,T;L^{q_1}(\mathbb{R}^d))} + \parallel j_\xi\parallel_{L^\infty(0,T;L^{q_2}(\mathbb{R}^d))}\le C(T),
\end{align*}
where $p\in[1,\infty]$, $q_1\in[1,(d+2)/d)$ and $q_2\in[1,(d+2)/(d+1))$. In particular, the $L^\infty$- estimate holds
\begin{align*}
\|f_\xi(\cdot,\cdot,t)\|_{L^\infty(\R^{d}\times\R^{d})}\le e^{Ct}\|f_0\|_{L^\infty(\R^{d}\times\R^{d})},
\end{align*}
where $C$ is a positive constant independent of $\varepsilon,\delta$.
\end{lem}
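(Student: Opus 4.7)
The proof is a direct transcription, now at the level of the limit function $f_\xi$, of the bounds already established for the doubly-regularized $f^\eta$ in the proof of Theorem \ref{thm2.1}; the point is that none of those constants depends on $\varepsilon$ or $\delta$. I would organize the argument in three steps.

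First, for the $L^p$ estimate, multiply \eqref{pde2} by $p f_\xi^{p-1}$ and integrate over $\mathbb{R}^d\times\mathbb{R}^d$. The free transport term drops because $\nabla_x\cdot v=0$; the drift $\gamma v+\lambda(\nabla_x V+\nabla_x W_\varepsilon*\rho_\xi)+\beta(v-u_\xi)$ depends on $v$ only through the linear pieces $\gamma v$ and $\beta v$ (the $x$-dependent terms $u_\xi(x,t)$ and $\nabla_x V+\nabla_x W_\varepsilon*\rho_\xi$ are divergence-free in $v$), so after integration by parts in $v$ the full drift contribution reduces to $(p-1)d(\gamma+\beta)\int f_\xi^p\,dxdv$, independently of $u_\xi$ and of $\rho_\xi$. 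The diffusion produces the standard Fisher-type term $-\frac{4\sigma(p-1)}{p}\|\nabla_v f_\xi^{p/2}\|_{L^2}^2$. Grönwall's lemma then yields, for every $p\in[1,\infty)$,
\begin{equation*}
\|f_\xi\|_{L^\infty(0,T;L^p(\mathbb{R}^{2d}))}^p+\frac{4\sigma(p-1)}{p}\int_0^T\|\nabla_v f_\xi^{p/2}\|_{L^2(\mathbb{R}^{2d})}^2\,dt\le \|f_0\|_{L^p(\mathbb{R}^{2d})}^p e^{d(\gamma+\beta)(p-1)T},
\end{equation*}
and sending $p\to\infty$ gives $\|f_\xi(\cdot,\cdot,t)\|_{L^\infty}\le e^{Ct}\|f_0\|_{L^\infty}$.

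Second, I would extract a $\xi$-independent second moment from the entropy inequality \eqref{entropy}. All terms on the left are non-negative (the interaction piece because $W_\varepsilon\ge 0$, the dissipation trivially), while on the right the initial interaction energy $\frac12\int W_\varepsilon(x-y)\rho_0(x)\rho_0(y)\,dxdy$ is controlled by $\frac12\int W(x-y)\rho_0(x)\rho_0(y)\,dxdy$ thanks to the pointwise bound $W_\varepsilon\le W$, which is finite by the hypothesis $(W*\rho_0)f_0\in L^1$. After dropping the dissipation and handling the entropy piece via $|\log f|f\le f\log f+C(1+|v|^2+|x|^2)f$, the residual inequality is of Grönwall form for $y(t):=\int(|v|^2+|x|^2)f_\xi\,dxdv$, producing a bound uniform in $\varepsilon,\delta$.

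Finally, the $L^{q_1}$ bound on $\rho_\xi$ and the $L^{q_2}$ bound on $j_\xi$ follow by the interpolation of \cite[Lemma 2.4]{karper2013existence}. The elementary splittings
\begin{equation*}
\int_{\mathbb{R}^d}f_\xi\,dv\le \|f_\xi\|_{L^\infty}R^d+R^{-2}\int|v|^2 f_\xi\,dv,\qquad \Bigl|\int_{\mathbb{R}^d}vf_\xi\,dv\Bigr|\le \|f_\xi\|_{L^\infty}R^{d+1}+R^{-1}\int|v|^2 f_\xi\,dv,
\end{equation*}
optimized in $R$ for each $(x,t)$, give pointwise bounds $\rho_\xi\le C\|f_\xi\|_{L^\infty}^{2/(d+2)}(\int|v|^2 f_\xi\,dv)^{d/(d+2)}$ and $|j_\xi|\le C\|f_\xi\|_{L^\infty}^{1/(d+2)}(\int|v|^2 f_\xi\,dv)^{(d+1)/(d+2)}$; taking $L^{q_1}_x$ and $L^{q_2}_x$ norms and invoking the first two steps produces exactly the stated ranges $q_1\in[1,(d+2)/d)$ and $q_2\in[1,(d+2)/(d+1))$. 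The only delicate point, and arguably the sole ``soft'' step of the whole argument, is the verification that no constant secretly depends on $\varepsilon$ or $\delta$; this relies on the two observations highlighted above, namely $W_\varepsilon\le W$ and the $v$-independence of $u_\xi$ that eliminates it from the $L^p$ computation.
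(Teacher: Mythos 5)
Your proposal matches the paper's proof essentially step for step: the $L^p$ bound via the test function $pf_\xi^{p-1}$, the second-moment bound via the entropy inequality \eqref{entropy} and Grönwall, and the $\rho_\xi$, $j_\xi$ bounds via the interpolation of \cite[Lemma 2.4]{karper2013existence}. Your extra observations — that $u_\xi$ is $v$-independent so it drops from the $L^p$ computation, and that $W_\varepsilon\le W$ makes the initial interaction energy uniformly controlled — are exactly the points that make the constants $\varepsilon,\delta$-independent, and are implicit in the paper's argument.
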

\begin{proof}
Taking $p(f_\xi)^{p-1},p\ge1$ as a test function in (\ref{ws}). (Indeed, although $f_\xi$ is only integrable, it can be approached by a function in $\mathcal{C}_c^\infty([0,T]\times \mathbb{R}^{2d})$.)
\begin{align*}
\frac{d}{dt}&\int_{\R^d}\int_{\R^d} (f_\xi)^pdxdv\\
=&(p-1)\int_{\R^d}\int_{\R^d} (f_\xi)^p\nabla_v\cdot\Big(\gamma v+\lambda(\nabla_x V +\nabla_x W_\varepsilon*\rho_\xi)+ \beta(v-u_\xi)\Big)dxdv\\
&-\sigma p(p-1)\int_{\R^d}\int_{\R^d} (f_\xi)^{p-2}|\nabla_v f_\xi|^2dxdv\\
=& d(\gamma+\beta)(p-1)\int_{\R^d}\int_{\R^d} (f_\xi)^p dxdv -\frac{4\sigma (p-1)}{p}\int_{\R^d}\int_{\R^d}|\nabla_v (f_\xi)^{p/2}|^2dxdv.
\end{align*} 
Hence using $\mathrm{Gr\ddot{o}nwall}$’s lemma, we have that for $p\in[1, \infty)$, it holds
\begin{align*}
\parallel f_\xi&\parallel_{L^\infty(0,T; L^p(\R^{2d}))}^p + \frac{4\sigma (p-1)}{p} \int_0^T \parallel\nabla_v(f_\xi
)^{p/2}(\cdot,\cdot,t)\parallel_{L^2(\R^{2d})}^2dt\le \parallel f_0\parallel_{L^p(\R^{2d})}^pe^{{d(\gamma+\beta)(p-1)}T}.
\end{align*}
Moreover, by the entropy inequality (\ref{entropy}), we can obtain
\begin{align}\label{sm}
\int_{\R^d}\int_{\R^d}\Big(\frac{|v|^2}{2}+\frac{|x|^2}{2}\Big)f_\xi dxdv\le C,
\end{align}
where $C$ is a positive constant independent of $\varepsilon,\delta$.
From \cite[Lemma 2.4]{karper2013existence}, we know that
\begin{align*}
&\parallel \rho_\xi\parallel_{L^\infty(0,T;L^{q_1}(\mathbb{R}^d))}\le C\qquad \forall q_1\in [1,(d+2)/d),\\
&\parallel j_\xi\parallel_{L^\infty(0,T;L^{q_2}(\mathbb{R}^d))}\le C\qquad \forall q_2\in[1, (d+2)/(d+1)).
\end{align*}
Thus, we conclude the proof.

\end{proof}
\subsection{Existence of solutions to the model \eqref{pde1}}
In this subsection, we establish the existence of weak solutions to the problem described in equation \eqref{pde1}. Our proof is structured in two parts: initially, we demonstrate the existence of weak solutions as $\varepsilon$ and $\delta$ approach zero. Subsequently, we examine the convergence of these solutions as $\nu \to 0$. The main findings are detailed in the theorem below.
\begin{thm}\label{thm2.2}
Let $T>0$. Suppose that $f_0$ satisfies
$$ f_0 \in L_+^1\cap L^\infty(\mathbb R^d\times\mathbb{R}^d) \quad and\quad (|v|^2+V+W*\rho_0)f_0 \in L^1(\mathbb R^d\times\mathbb{R}^d).$$
Then for any $\lambda,\beta,\gamma >0$, there exists a weak solution $f$ of Eq.(\ref{pde1}) satisfying
\begin{align*}
&f\in\mathcal{C}([0,T];L^1(\mathbb{R}^d\times\mathbb{R}^d))\cap L^\infty([0,T]\times\mathbb{R}^d\times\mathbb{R}^d),\\
&(|v|^2+V+W*\rho)f \in L^\infty(0,T; L^1(\mathbb R^d\times\mathbb{R}^d)),
\end{align*} 
and the following integral equation
\begin{align}
	&\int_0^T\int_{\mathbb{R}^d}\int_{\mathbb{R}^d} -f\varphi_t - vf\nabla_x\varphi + \big[\big(\gamma v+\lambda(\nabla_x V +\nabla_x W*\rho)\big)f\big]\nabla_v\varphi dvdxdt \nonumber\\
	&+ \int_0^T\int_{\mathbb{R}^d}\int_{\mathbb{R}^d} \big(\beta(v-u)f + \sigma\nabla_v f\big)\nabla_v\varphi 
	dvdxdt = \int_{\mathbb{R}^d}\int_{\mathbb{R}^d} f_0\varphi(0,\cdot)dvdx \label{sol}
    \end{align}
for any $\varphi\in \mathcal{C}_c^\infty([0,T]\times \mathbb{R}^d\times \mathbb{R}^d)$.

\end{thm}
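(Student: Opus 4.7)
The plan is to obtain the weak solution $f$ as a limit of the regularized solutions $f_\xi$ supplied by Theorem \ref{thm2.1}, taking the limit first in $\varepsilon,\delta\to 0$ at fixed $\nu>0$, and then in $\nu\to 0$. Throughout, the $\varepsilon,\delta$-independent bounds of Lemma \ref{lem2.3}, combined with the velocity-averaging compactness afforded by Lemma \ref{lem2.1} and the limit-identification Lemma \ref{lem2.2}, will provide enough compactness to pass to the limit in every term of the weak formulation \eqref{ws}.

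\textbf{Step 1 ($\varepsilon,\delta\to 0$).} From Lemma \ref{lem2.3} I extract subsequences so that $f_\xi\rightharpoonup f_\nu$ in $L^\infty(0,T;L^p(\mathbb{R}^{2d}))$ for all $p\in[1,\infty]$, $\rho_\xi\rightharpoonup\rho_\nu$, $j_\xi\rightharpoonup j_\nu$ in the appropriate Lebesgue spaces, and $\nabla_v f_\xi\rightharpoonup\nabla_v f_\nu$ in $L^2$. Writing \eqref{pde2} as $\partial_t f_\xi+v\cdot\nabla_x f_\xi=\nabla_v\cdot G^\xi$, the flux $G^\xi$ is uniformly bounded in $L^\infty(0,T;L^p_{\rm loc}(\mathbb{R}^{2d}))$ for $p\in[1,(d+2)/(d+1))$ by the same computation used in Step 3 of Theorem \ref{thm2.1}, the new contribution $u_\xi f_\xi$ being controlled by $\tfrac{1}{\nu}\|\!\int v\phi_2^\delta f_\xi dv\|\cdot\|f_\xi\|_{L^\infty}$ together with \eqref{sm}. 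Lemmas \ref{lem2.1}--\ref{lem2.2} then give strong convergence of $\int\!\varphi(v)f_\xi dv$ in $L^q([0,T]\times\mathbb{R}^d)$ for each $|\varphi(v)|\le c(1+|v|)$, which applied to $\varphi=1$ and $\varphi=v\phi_2^\delta$ yields $\phi_1^\varepsilon*\rho_\xi\to\rho_\nu$ and $\int v\phi_2^\delta\phi_1^\varepsilon*f_\xi dv\to j_\nu$ strongly. Dividing by $\nu+\phi_1^\varepsilon*\rho_\xi\ge\nu$ and applying dominated convergence identifies $u_\xi\to j_\nu/(\nu+\rho_\nu)$ in $L^q$. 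For the Newtonian force, $\nabla_x W_\varepsilon\to\nabla_x W$ pointwise with the uniform pointwise bound $|\nabla W_\varepsilon(x)|\lesssim|x|^{1-d}$, so a standard Lebesgue/Riesz-potential argument yields $(\nabla_x W_\varepsilon*\rho_\xi)f_\xi\rightharpoonup(\nabla_x W*\rho_\nu)f_\nu$ in the distributional sense. Passing to the limit in \eqref{ws} produces a weak solution $f_\nu$ of the equation in which the mollifiers are removed and the alignment term is $u_\nu=j_\nu/(\nu+\rho_\nu)$.

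\textbf{Step 2 ($\nu\to 0$).} The bounds of Lemma \ref{lem2.3}, together with the entropy inequality \eqref{entropy}, pass through Step 1 and therefore hold uniformly in $\nu$. Repeating the same compactness argument yields $f_\nu\rightharpoonup f$, $\rho_\nu\to\rho$ and $j_\nu\to j$ strongly in $L^q$, and lower semicontinuity gives $(|v|^2+V+W*\rho)f\in L^\infty(0,T;L^1)$; the time continuity $f\in\mathcal{C}([0,T];L^1)$ comes from the equation via a standard density/approximation argument. The essential difficulty---and the main obstacle of the proof---is passing to the limit in the alignment product $u_\nu f_\nu=\frac{j_\nu f_\nu}{\nu+\rho_\nu}$, because the denominator may degenerate on the vacuum set $\{\rho=0\}$. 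The remedy, following \cite{karper2013existence,carrillo2021quantifying}, is to exploit Cauchy--Schwarz, $|j_\nu|^2\le\rho_\nu\!\int\!|v|^2 f_\nu dv$, giving $|u_\nu|\le\rho_\nu^{1/2}(\nu+\rho_\nu)^{-1}(\int|v|^2f_\nu dv)^{1/2}$ and hence $|u_\nu f_\nu|\le f_\nu^{1/2}(\int|v|^2 f_\nu dv)^{1/2}$, a bound independent of $\nu$ which shows that $u_\nu f_\nu$ is equi-integrable and that its limit vanishes a.e.\ on $\{\rho=0\}$. On the complement $\{\rho>0\}$, the denominator is bounded below along the subsequence, so $u_\nu f_\nu\rightharpoonup (j/\rho)f$ by the strong convergence of $j_\nu$ and $\rho_\nu$; the entropy-dissipation term in \eqref{entropy} provides the additional weighted $L^2$ control of $(v-u_\nu)f_\nu$ needed to identify the limit in integral form.

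\textbf{Conclusion.} Combining these weak and strong convergences and passing to the limit term by term in \eqref{ws} yields the integral identity \eqref{sol}; the regularity assertions are immediate from the preserved uniform bounds. Steps 1 and 2 differ only in the treatment of the alignment denominator: in Step 1 it is kept safely above $\nu>0$ by the mollifier construction, whereas in Step 2 the limiting alignment $u=j/\rho$ must be interpreted via the vacuum-splitting argument above, which is the genuinely delicate part of the proof.
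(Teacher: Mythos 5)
Your plan follows the same two-stage limiting procedure as the paper (first $\varepsilon,\delta\to 0$ at fixed $\nu$, then $\nu\to 0$), with the same key ingredients: Lemma \ref{lem2.3} for $\varepsilon,\delta$-uniform bounds, the velocity-averaging Lemmas \ref{lem2.1}--\ref{lem2.2}, Cauchy--Schwarz control of the alignment term, and a vacuum-set splitting to identify $u_\nu f_\nu\rightharpoonup uf$. So the overall route matches.

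Two points need fixing, both in Step 2. First, the claimed pointwise bound $|u_\nu f_\nu|\le f_\nu^{1/2}\big(\int|v|^2 f_\nu\,dv\big)^{1/2}$ is false. From $|j_\nu|\le\rho_\nu^{1/2}\big(\int|v|^2 f_\nu\,dv\big)^{1/2}$ you get $|u_\nu f_\nu|\le \frac{\rho_\nu^{1/2}}{\nu+\rho_\nu}\big(\int|v|^2 f_\nu\,dv\big)^{1/2}f_\nu$; to pass to your bound you would need $\rho_\nu f_\nu\le(\nu+\rho_\nu)^2$ pointwise, which has no reason to hold (there is no pointwise comparison between the phase-space density $f_\nu(x,v)$ and its velocity marginal $\rho_\nu(x)$). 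What is actually true, and what the paper uses, is the integral estimate $\int u_\nu^2 f_\nu\,dv\,dx\le\int|v|^2 f_\nu\,dv\,dx$, i.e.\ $u_\nu\sqrt{f_\nu}$ is bounded in $L^2$ uniformly in $\nu$; combined with $\sqrt{f_\nu}\in L^2$ this gives the uniform $L^p$ bound ($p<(d+2)/(d+1)$) on $u_\nu\rho_\varphi^\nu$ used to extract the weak limit $M$. Second, the identification step ``on $\{\rho>0\}$ the denominator is bounded below along the subsequence'' is not correct as stated: $\rho_\nu\to\rho$ only a.e., so $\rho_\nu$ need not be uniformly bounded away from zero on any fixed positive-measure subset of $\{\rho>0\}$. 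The paper handles this with an Egorov-theorem argument: restrict to $\mathcal{A}_{h_1}^{h_2}=\{(x,t)\in B(0,h_1)\times(0,T):\rho>h_2\}$, extract $\mathcal{A}_\mu\subset\mathcal{A}_{h_1}^{h_2}$ with $|\mathcal{A}_{h_1}^{h_2}\setminus\mathcal{A}_\mu|<\mu$ on which $(\rho_\nu,j_\nu)\to(\rho,j)$ uniformly, then identify $M=u\rho_\varphi$ on $\mathcal{A}_\mu$ and let $h_1\to\infty$, $h_2,\mu\to 0$. You should incorporate this Egorov step explicitly rather than rely on a pointwise lower bound on the denominator.
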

\begin{proof}
Step1: In this step, we prove the solution of \eqref{pde2} converges to the solution of the following equation as $\varepsilon,\delta\to0$
\begin{align}\label{pdev}
\partial_t& f_\nu +v\cdot\nabla_x f_\nu- \nabla_v\cdot[\big(\gamma v+\lambda(\nabla_x V +\nabla_x W*\rho_\nu)\big)f_\nu]= \nabla_v\cdot[\beta(v-u_\nu)f_\nu+\sigma\nabla_vf_\nu],
\end{align}
where $u_\nu$ is defined in the following form
\begin{align*}
u_\nu(x,t) = \frac{\int_{\mathbb{R}^d}v f_\nu dv}{\nu+\int_{\mathbb{R}^d}f_\nu dv}.
\end{align*}
By the uniform estimate in Lemma \ref{lem2.3}, there exist subsequences of $f_\xi$, $\nabla_v f_\xi$ , $\rho_\xi$ and $j_\xi$, without relabeling for convenience, such that the following weak convergences hold as $\varepsilon,\delta\to 0$
\begin{align}
\label{f22}&  f_\xi\rightharpoonup f_\nu \qquad  \mbox{in}\ L^\infty(0,T;L^p(\mathbb{R}^{2d}))\quad \forall p\in[1,\infty],\\
& \rho_\xi\rightharpoonup\rho_\nu \qquad  \mbox{in}\ L^\infty(0,T;L^{q_1}(\mathbb{R}^d))\quad \forall q_1\in[1,(d+2)/d),\\
&  j_\xi\rightharpoonup j_\nu \qquad  \mbox{in}\ L^\infty(0,T; L^{q_2}(\mathbb{R}^d))\quad \forall q_2\in[1,(d+2)/(d+1)),\label{j222}\\
&\qquad\ \nabla_vf_\xi\rightharpoonup \nabla_v f_\nu \qquad 
 \mbox{in}\ L^2([0,T]\times\mathbb{R}^{2d}).\label{f222}
\end{align}
Define $G_\xi$ as
$$G_\xi:=\Big(\gamma v+\lambda(\nabla_x V +\nabla_x W_\varepsilon*\rho_\xi)+ \beta(v-u_\xi)\Big)f_\xi+\sigma\nabla_vf_\xi.$$ 
Just like the estimate to (\ref{G}) and \eqref{G11} in Theorem \ref{thm2.1}, we see that $G_\xi\in L^\infty(0,T;L^p_{loc}{(\mathbb{R}^{2d})})$, $p\in[1,(d+2)/(d+1))$.  The additional difficulty is to bound the term  
$\nabla_x W_\varepsilon*\rho_\xi \cdot f_\xi$
uniformly bounded independent of $\varepsilon, \delta$. Thus, we consider for any $t\in[0,T]$,
\begin{align*}
\|\nabla_x W_\varepsilon*\rho_\xi \cdot f_\xi\|_{L^p(\mathbb{R}^{2d})}
&\le 
\|\nabla_x W_\varepsilon*\rho_\xi\|_{L^p(\mathbb{R}^{d})}
\|f_\xi\|_{L^\infty(\mathbb{R}^{2d})}\\
&\le
\| \nabla_x W\|_{L_w^{d/(d-1)}(\R^{d})}\|\rho_\xi\|_{L^{q_1}(\R^{d})}\|f_\xi\|_{L^\infty(\R^{2d})}\le C,
\end{align*}
where $p\in[1,d/(d-1)]$ and $\nabla_x W=C(d)|x|^{1-d}$.
The boundednesses of others are as in the previous estimate. Thus, we can obtain $G_\xi\in L^\infty(0,T;L^p_{loc}{(\mathbb{R}^{2d})})$ for all $p\in[1,(d+2)/(d+1))$ and apply Lemma \ref{lem2.2} to show that the following strong convergences  similarly as before, for $q\in(1,(d+2)/(d+1))$,
\begin{align}\label{stc2}
&\int_{\R^d}f_\xi dv\to \int_{\R^d}f_\nu dv,
\int_{\R^d}v\phi_2^\delta(v)f_\xi dv\to \int_{\R^d}vf_\nu dv\quad \mbox{in}\ L^q([0,T]\times\mathbb{R}^d)\quad \mbox{as}\ \varepsilon,\delta \to 0.
\end{align} 
and since the weak convergence \eqref{j222}, we have
\begin{align*}
\|k_\xi-j_\nu\|_{L^q([0,T]\times\mathbb{R}^d)}
=&\Big\|\int_{\mathbb{R}^d}v\phi_2^\delta(v)\cdot\phi_1^\varepsilon*f_\xi dv-\int_{\mathbb{R}^d}vf_\nu dv\Big\|_{L^q([0,T]\times\mathbb{R}^d)}\\
=&\Big\|\int_{\mathbb{R}^d}v\phi_2^\delta(v)\cdot\phi_1^\varepsilon*f_\xi dv-\int_{\R^d}v\phi_2^\delta(v)f_\xi dv\Big\|_{L^q([0,T]\times\mathbb{R}^d)}\\
&+\Big\|\int_{\R^d}v\phi_2^\delta(v)f_\xi dv-\int_{\mathbb{R}^d}vf_\nu dv\Big\|_{L^q([0,T]\times\mathbb{R}^d)}
\end{align*}
According to the properties of mollifier,
\begin{align*}
\Big\|\phi_1^\varepsilon*\int_{\mathbb{R}^d}v\phi_2^\delta(v)\cdot f_\xi dv-\int_{\R^d}v\phi_2^\delta(v)f_\xi dv\Big\|_{L^q([0,T]\times\mathbb{R}^d)} \to 0\qquad \mbox{as}\ \varepsilon\to 0, 
\end{align*}
and by \eqref{stc2}, we have
\begin{align*}
\Big\|\int_{\R^d}v\phi_2^\delta(v)f_\xi dv-\int_{\mathbb{R}^d}vf_\nu dv\Big\|_{L^q([0,T]\times\mathbb{R}^d)}\to 0 \qquad \mbox{as} \ \varepsilon,\delta\to 0.    
\end{align*}
Thus, we can get the following convergent subsequences for $q\in(1,(d+2)/(d+1))$,
\begin{align}\label{sc2}
\rho_\xi\to\rho_\nu,\ k_\xi\to j_\nu\qquad \mbox{in}\ L^q([0,T]\times\mathbb{R}^d)\qquad \mbox{as}\ \varepsilon,\delta\to 0\quad \mbox{a.e.}.
\end{align}
Hence, refer to \cite[(5.7),(5.10)]{carrillo2021quantifying}, we also have
$$\nabla W_\varepsilon*\rho_\xi\rightharpoonup\nabla W*\rho_\nu,
\quad  u_\xi f_\xi\rightharpoonup u_\nu f_\nu \qquad\qquad \mbox{as}\ \varepsilon,\delta\to 0.$$
The processes of proofs used (\ref{sc2}). By combining the weak convergences of $f_\xi$, $vf_\xi$ and $\nabla_v f_\xi$ in (\ref{f22})-(\ref{f222}), we conclude that
$f_\nu$ is a weak solution of (\ref{pdev}).
And as Lemma \ref{lem2.3}, we have the following uniform estimates
\begin{align}\label{fnub}
&\parallel f_\nu\parallel_{L^\infty(0,T;L^p(\mathbb{R}^{2d}))} 
+\parallel \nabla_v(f_\nu
)^{p/2}\parallel_{L^2([0,T]\times\mathbb{R}^{2d})}\nonumber\\
&\qquad+\parallel \rho_\nu\parallel_{L^\infty(0,T;L^{q_1}(\mathbb{R}^d))} + \parallel j_\nu\parallel_{L^\infty(0,T;L^{q_2}(\mathbb{R}^d))}\le C(T),
\end{align}
where $p\in[1,\infty)$, $q_1\in[1,(d+2)/d)$ and $q_2\in[1,(d+2)/(d+1))$. In particular, it is also true that
\begin{align*}
\|f_\nu(\cdot,\cdot,t)\|_{L^\infty(\R^{2d})}\le e^{Ct}\|f_0\|_{L^\infty(\R^{2d})},
\end{align*}
where $C$ is a positive constant independent of $\nu$.

Step2: In this step, we prove existence of weak solution to (\ref{pde1}).
By the uniform estimate \eqref{fnub}, there exist subsequences of $f_\nu$, $\nabla_v f_\nu$ , $\rho_\nu$ and $j_\nu$, without relabeling for convenience, we have the following weak convergences hold as $\nu\to0$
\begin{align}
\label{f33}&  f_\nu\rightharpoonup f \qquad  \mbox{in}\ L^\infty(0,T;L^p(\mathbb{R}^{2d}))\quad \forall p\in[1,\infty],\\
& \rho_\nu\rightharpoonup\rho \qquad  \mbox{in}\ L^\infty(0,T;L^{q_1}(\mathbb{R}^d))\quad \forall q_1\in[1,(d+2)/d),\\
&  j_\nu\rightharpoonup j \qquad  \mbox{in}\ L^\infty(0,T; L^{q_2}(\mathbb{R}^d))\quad \forall q_2\in[1,(d+2)/(d+1)),\\
&\qquad\ \nabla_vf_\nu\rightharpoonup \nabla_v f \qquad 
 \mbox{in}\ L^2([0,T]\times\mathbb{R}^{2d}).\label{f333}
\end{align}
For the existence of weak solutions to (\ref{pde1}), it remains to prove the convergence as $\nu\to0$ to (\ref{pdev}).
we define $G_\nu$ as
$$G_\nu:=\Big(\gamma v+\lambda(\nabla_x V +\nabla_x W*\rho_\nu)+ \beta(v-u_\nu)\Big)f_\nu+\sigma\nabla_vf_\nu.$$ 
We see that $G_\nu\in L^\infty(0,T;L^p_{loc}{(\mathbb{R}^{2d}))}, p\in[1,(d+2)/(d+1))$.  The additional difficulty is to bound the term $u_\nu f_\nu$
uniformly with respect to $\nu$. Thus, we consider for $t\in[0,T]$,
\begin{align}\label{uf}
\|u_\nu f_\nu\|_{L^p(\R^{2d})}\le \|f_\nu\|_{L^{p/(2-p)}(\R^{2d})}^{1/2}\|u_\nu\sqrt{f_\nu}\|_{L^2(\R^{2d})},
\end{align} 
where $p\in(1,(d+2)/(d+1))$ and $p/(2-p)\in(1,(d+2)/(d+1))$. We notice that
\begin{align*}
\int_{\R^d}\int_{\R^d}(u_\nu)^2f_\nu dvdx
&\le\int_{\R^d}\frac{\int_{\R^d}f_\nu  dv \Big(\int_{\R^d}|v|^2f_\nu dv\Big)}
{(\nu+\rho_\nu)^2 }\rho_\nu dx\le
\int_{\R^d}\int_{\R^d} |v|^2f_\nu dvdx<C.
\end{align*}
The boundedness of other terms can be obtained using the same method as in the preceding estimates. Thus, we obtain $G_\nu\in L^\infty(0,T;L^p_{loc}{(\R^{2d})})$ for all $p\in[1,(d+2)/(d+1))$ and apply Lemma \ref{lem2.2} to show that the following strong convergences  similarly as before, for $q\in(1,(d+2)/(d+1))$,
\begin{align}\label{nu}
&\rho_\nu \to\rho,\quad
j_\nu \to j\qquad \mbox{in}\ L^q([0,T]\times\mathbb{R}^d)\quad \mbox{as}\ \nu\to 0.
\end{align} 

To show that $f$ is a weak solution to (\ref{pde1}), it is necessary for us to consider the following convergences in distribution sense, using the weak convergences of $f_\nu$, $vf_\nu$ and $\nabla_v f_\nu$ have been showed in (\ref{f33})-(\ref{f333}), we have
\begin{align}\label{wc}
\nabla W*\rho_\nu\rightharpoonup\nabla W*\rho,
\quad u_\nu f_\nu \rightharpoonup uf \qquad\qquad \mbox{as}\ \nu\to 0.
\end{align}
 The first term has already been proofed in \cite[(5.10)]{carrillo2021quantifying}. For the second term, choose a test function $\psi\in\mathcal{C}_c^\infty([0,T]\times\R^d)$ and $\varphi\in \mathcal{C}_c^\infty(\R^d)$, we
write $\rho_\varphi^\nu:=\int_{\R^d}f_\nu\varphi(v)dv$. Let $\Psi(x,v,t):=\psi(x,t)\varphi(v)$, then
\begin{align*}
\int_0^T\int_{\R^d}\int_{\R^d}u_\nu f_\nu\Psi(x,v,t) dxdvdt =\int_0^T\int_{\mathbb{R}^d}u_\nu\rho_\varphi^\nu\psi(x,t) dxdt.
\end{align*}
Similar to (\ref{uf}), we can obtain the boundedness of $u_\nu\rho_\varphi^\nu$
in $L^\infty(0,T;L^p(\mathbb{R}^d))$, namely,
\begin{align}\label{uf}
\|u_\nu \rho_\varphi^\nu\|_{L^p(\R^d)}
&\le \|\varphi\|_{L^\infty(\R^d)}\|\rho_\nu\|_{L^{p/(2-p)}(\R^d)}^{1/2}\|u_\nu\sqrt{\rho_\nu}\|_{L^2(\R^d)}\nonumber\\
&\le C \int_{\R^d}\frac{\int_{\R^d}f_\nu  dv \Big(\int_{\R^d}|v|^2f_\nu dv\Big)}
{(\nu+\rho_\nu)^2 }\rho_\nu dx\le C \int_{\R^d}\int_{\R^d} |v|^2f_\nu dvdx.
\end{align} 
Thus, there is $M$ such that, up to a subsequence,
\begin{align}\label{ur}
u_\nu\rho_\varphi^\nu\rightharpoonup M \qquad  \mbox{in}\ L^\infty(0,T;L^p(\R^d))\qquad\forall p\in (1,(d+2)/(d+1)).    
\end{align}

Next, we derive what M is. Let $h_1, h_2>0$ and define
\begin{align*}
\mathcal{A}_{h_1}^{h_2}:=\big\{(x,t)\in B(0,h_1)\times(0,T):\rho(x,t)>h_2\big\}.
\end{align*} 
For each $h_1$ and $h_2$, we combine the almost everywhere convergence of $\big(\rho_\nu, j_\nu\big)$ to $(\rho,j)$ in (\ref{nu}) with the Egorov's theorem to deduce that for every $\mu> 0$, choose $\mathcal{A}_\mu\subset \mathcal{A}_{h_1}^{h_2}$ satisfying
$$\big|\mathcal{A}_{h_1}^{h_2}\backslash \mathcal{A}_\mu\big|< \mu\quad \mbox{and}\quad \big(\rho_\nu, j_\nu\big) \to(\rho,j)\qquad \mbox{as}\ \ \nu\to0\ \mbox{uniformly}\ \mbox{on} \ \mathcal{A}_\mu.$$
Then, for a sufficiently small $\nu$, we can obtain $\rho_\nu>h_2/2$ on $\mathcal{A}_\mu$. Consider
\begin{align*}
\int_{\mathcal{A}_\mu}
u_\nu\rho_\varphi^\nu- u\rho_\varphi dxdt
=&\int_{\mathcal{A}_\mu}
(u_\nu- u)\rho_\varphi^\nu dxdt +\int_{\mathcal{A}_\mu}
u(\rho_\varphi^\nu- \rho_\varphi) dxdt\\
=&\int_{\mathcal{A}_\mu}
\Big(\frac{1}{\nu+\rho_\nu}-\frac{1}{\rho}\Big)j_\nu\cdot\rho_\varphi^\nu dxdt+\int_{\mathcal{A}_\mu}
\frac{1}{\rho}(j_\nu-j)\cdot\rho_\varphi^\nu dxdt 
\\
&+\int_{\mathcal{A}_\mu}\int_{\R^d}
u(f_\nu- f)\varphi(v)dvdxdt := K_1 + K_2 +K_3.
\end{align*}

For $K_1$, since $\rho_\nu\to \rho$ a.e. in (\ref{nu}) and 
\begin{align*}
\|\rho_\varphi^\nu\|_{L^\infty([0,T]\times\R^d)}
=&\Big\|\int_{\R^d}f_\nu\varphi(v)dv\Big\|_{L^\infty([0,T]\times\R^d)}  \\
\le&\|f_\nu\|_{L^\infty([0,T]\times\R^{2d})}\int_{\R^d}\varphi(v)dv\le Ce^{Ct}\|f_0\|_{L^\infty(\R^{2d})},
\end{align*}
we have that
\begin{align*}
\Big|\Big(\frac{1}{\nu+\rho_\nu}-\frac{1}{\rho}\Big)j_\nu\cdot\rho_\varphi^\nu\Big|
&\le \frac{C\|\rho_\varphi^\nu\|_{L^\infty(\R^d)}}{h_2}|j_\nu|\le \frac{C}{h_2}|j_\nu|\qquad \mbox{on}\ \mathcal{A}_\mu.
\end{align*}
Thus, we can use the dominated
convergence theorem to get
\begin{align*}
&K_1= \int_{\mathcal{A}_\mu}
\Big(\frac{1}{\nu+\rho_\nu}-\frac{1}{\rho}\Big)j_\nu\cdot\rho_\varphi^\nu dxdt
\to 0\qquad \mbox{as}\ \nu\to 0 .
\end{align*}

For $K_2$, since $\rho>h_2$ in $\mathcal{A}_\mu$,  we estimate from (\ref{nu}),
\begin{align*}
K_2=\int_{\mathcal{A}_\mu}
\frac{1}{\rho}(k_\nu-j)\cdot\rho_\varphi^\nu dxdt\le \frac{1}{h_2}\|j_\nu-j\|_{L^q(\mathcal{A}_\mu)}\|\rho_\varphi^\nu\|_{L^{q'}(\mathcal{A}_\mu)}\to 0\qquad\mbox{as}\ \nu\to 0,
\end{align*}
where $q\in(1,(p+2)/(p+1))$.
For the estimate of $K_3$, since $u\varphi\in L^p(\mathcal{A}_\mu \times\R^{d})$ and $f_\nu\rightharpoonup f$ in $L^\infty(0,T;L^q(\R^{2d}))$ for some $q\in[1,\infty]$,  obviously,
\begin{align*}
K_3=\int_{\mathcal{A}_\mu}\int_{\R^d}
u(f_\nu- f)\varphi dvdxdt\to 0\qquad \mbox{as}\  \nu\to 0.
\end{align*}

In summary,  
$$\int_{\mathcal{A}_\mu}
u_\nu\rho_\varphi^\nu- u\rho_\varphi dxdt\to 0\qquad \mbox{as}\  \nu\to 0,$$
and by \eqref{ur}, we have
$$M=u\rho_\varphi\qquad \mbox{on}\ \ \mathcal{A}_\mu.$$
Since the choices of $h_1, h_2$ and $\mu$ are arbitrary, we now obtain
$$M=u\rho_\varphi\qquad \mbox{on}\ \{\rho>0\}.$$
Furthermore,
\begin{align*}
\int_0^T\int_{\mathbb{R}^{d}}\int_{\mathbb{R}^{d}}
u_\nu  f_\nu \Psi dxdvdt& = \int_0^T\int_{\mathbb{R}^{d}}
u_\nu\rho_\varphi^\nu \psi dxdt\\
&\to \int_0^T\int_{\mathbb{R}^{d}}
u \rho_\varphi\psi dxdt
=\int_0^T\int_{\mathbb{R}^{d}}\int_{\mathbb{R}^{d}}
uf\Psi dxdvdt\quad \mbox{as}\ \nu\to 0.
\end{align*}
Thus, for all test functions $\Psi$, we obtain
\begin{align*}
\lim\limits_{\nu\to0}\int_0^T\int_{\mathbb{R}^{d}}\int_{\mathbb{R}^{d}}
u_\nu f_\nu\Psi dxdvdt= 
\int_0^T\int_{\mathbb{R}^{d}}\int_{\mathbb{R}^{d}}
 uf\Psi dxdvdt,
\end{align*}
which implies $u_\nu f_\nu$ weakly converges to $uf$. Therefore, $f$ is a weak solution to \eqref{pde1}.

\end{proof}

\section{Error Estimation for the Mean-Field Limit}

In this section, we primarily focus on establishing the error estimate for solutions in the expectation sense for the stochastic system with moderately many particles, denoted as (\ref{sde1}), and the mean-field system, denoted as (\ref{sde2}), as outlined in Theorem \ref{thm3.2}. Additionally, we demonstrate the existence and uniqueness of solutions for these systems.

\subsection{Unique existence of solutions to systems \eqref{sde1} and \eqref{sde2}}
Firstly, we establish the existence and uniqueness of solutions for systems (\ref{sde1}) and (\ref{sde2}). Notice that for fixed $\xi >0$, since $\nabla_x W_\varepsilon$ is bounded and Lipschitz continuous in (\ref{W}), we can obtain the following result by standard SDE theory.
\begin{lem}
For any fixed $\xi$, the problem (\ref{sde1}) has a unique global solution $(X^i_{\xi,N}(t), V^i_{\xi,N}(t))$.
\end{lem}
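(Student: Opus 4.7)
The plan is to apply the classical strong existence and uniqueness theorem for stochastic differential equations driven by Brownian motion (see e.g.\ the standard Itô theory), which requires only that the diffusion coefficient be constant (or globally Lipschitz) and that the drift vector field satisfy a global Lipschitz condition together with at most linear growth on the joint state space $\mathbb{R}^{2dN}$.

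I would organize the verification in four blocks corresponding to the four terms appearing in the drift of \eqref{sde1} for the aggregate state $Z(t) := (X^1_{\xi,N}(t), V^1_{\xi,N}(t), \dots, X^N_{\xi,N}(t), V^N_{\xi,N}(t))$. First, the damping term $-\gamma V^i_{\xi,N}$ and the confinement term $-\lambda \nabla_x V(X^i_{\xi,N}) = -\lambda X^i_{\xi,N}$ are linear in $Z$ and therefore trivially globally Lipschitz with linear growth. Second, the interaction term $-\frac{\lambda}{N}\sum_{j\neq i}\nabla_x W_\varepsilon(X^i_{\xi,N}-X^j_{\xi,N})$ is bounded and globally Lipschitz in $(X^i_{\xi,N}, X^j_{\xi,N})$ by the estimates \eqref{W} on $\|\nabla W_\varepsilon\|_{L^\infty}$ and $\|D^2 W_\varepsilon\|_{L^\infty}$, both of which are finite for the fixed $\varepsilon>0$. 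Finally, the noise coefficient $\sqrt{2\sigma}$ is constant, so the diffusion part is immediate.

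The main obstacle, and the only nontrivial block, is to show that the alignment drift $-\beta(V^i_{\xi,N} - u_\xi(X^i_{\xi,N}))$ is globally Lipschitz in $Z$. For this I would argue that the function
\[
 u_\xi(X^i_{\xi,N}) = \frac{\frac{1}{N}\sum_{j=1}^N V^j_{\xi,N}\phi_2^\delta(V^j_{\xi,N})\,\phi_1^\varepsilon(X^i_{\xi,N}-X^j_{\xi,N})}{\frac{1}{N}\sum_{j=1}^N \phi_1^\varepsilon(X^i_{\xi,N}-X^j_{\xi,N}) + \nu}
\]
is a globally Lipschitz and bounded function of $Z$. Boundedness follows because the denominator is uniformly bounded below by $\nu>0$, while in the numerator each factor $V^j_{\xi,N}\phi_2^\delta(V^j_{\xi,N})$ is bounded by $2/\delta$ and $\phi_1^\varepsilon \in C^\infty_c$ is bounded. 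For the Lipschitz property I would apply the quotient rule: the numerator is globally Lipschitz since $v\mapsto v\phi_2^\delta(v)$ has bounded derivative (by construction), $\phi_1^\varepsilon$ is smooth and compactly supported hence Lipschitz, and the product of bounded Lipschitz functions is Lipschitz; the denominator is globally Lipschitz in the same way and bounded below by $\nu$. Combining these gives a global Lipschitz constant for $u_\xi$ depending on $\xi=(\varepsilon,\delta,\nu)$ but not on $Z$.

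Once all four blocks are verified, the drift of the $\mathbb{R}^{2dN}$-valued SDE satisfies a uniform global Lipschitz and linear growth condition (with constants depending on $\xi$ and $N$ but independent of time), and the diffusion is constant. A direct application of the standard strong existence and uniqueness theorem for SDEs with globally Lipschitz coefficients (e.g.\ Picard iteration on $[0,T]$ for arbitrary $T>0$, together with an a priori $L^2$ moment bound obtained from Itô's formula applied to $|Z(t)|^2$ and Gr\"onwall's inequality to rule out explosion) yields the desired unique global strong solution $(X^i_{\xi,N}(t),V^i_{\xi,N}(t))_{i=1}^N$ adapted to the filtration $(\mathcal{F}_t)_{t\geq 0}$.
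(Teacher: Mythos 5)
Your proposal is correct and follows the same route the paper merely gestures at: the paper dispenses with this lemma in one remark, observing that for fixed $\xi$ the coefficient $\nabla_x W_\varepsilon$ is bounded and Lipschitz (by \eqref{W}) and invoking ``standard SDE theory.'' You carry out the missing verification in full, in particular the genuinely nontrivial block showing that $u_\xi$ is globally bounded and Lipschitz in the aggregate state $Z$ via the quotient estimate and the uniform lower bound $\nu$ on the denominator, which is exactly what is needed to justify the appeal to the classical well-posedness theorem.
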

Then, with the help of Theorem \ref{thm2.1} and \ref{thm2.2}, we have the following result.
\begin{thm}\label{thm3.1}
If the regularized problem (\ref{pde2}) has a unique solution $f_\xi \in L^\infty\big(0,T;L^1 \cap L^\infty(\mathbb R^d\times \mathbb R^d)\big)$, then the initial value problem (\ref{sde2}) has a unique global solution $(\overline{X}_\xi(t),\overline{V}_\xi(t),f_\xi)$.
\end{thm}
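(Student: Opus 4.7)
Because $f_\xi$ is \emph{assumed} given as the unique solution of \eqref{pde2}, the coefficients that appear in the SDE \eqref{sde2} are prescribed functions of $(x,v,t)$ with no dependence on the law of the unknown process. Thus \eqref{sde2} is not a genuine McKean--Vlasov system but a family of $N$ decoupled, classical It\^o SDEs driven by the independent Brownian motions $B^i$. My plan is therefore in three stages: (i) verify that the drift coefficient is globally Lipschitz in $(x,v)$ with a constant that may depend on $\xi$ but is uniform in $t\in[0,T]$; (ii) invoke standard strong well-posedness for SDEs with Lipschitz drift and constant diffusion to obtain the unique pathwise solution $(\overline{X}_\xi^i,\overline{V}_\xi^i)$; (iii) check that the time marginal of this process does coincide with the given $f_\xi$ so that the triple $(\overline{X}_\xi,\overline{V}_\xi,f_\xi)$ is self-consistent.

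For step (i), the drift is
\[
b(x,v,t)=\bigl(v,\;-\gamma v-\beta(v-\overline{u}_\xi(x,t))-\lambda \nabla_x V(x)-\lambda\,\nabla_x W_\varepsilon\ast\rho_\xi(x,t)\bigr).
\]
The $v$-component and $\gamma v$, $\beta v$ are linear; $\nabla_x V(x)=x$ is linear as well. Using \eqref{W} together with $\rho_\xi\in L^\infty(0,T;L^1)$ (which follows from $f_\xi\in L^\infty_t(L^1\cap L^\infty)$), the Young-type estimates
\[
\|\nabla_x W_\varepsilon\ast\rho_\xi\|_{L^\infty_x}\le \|\nabla W_\varepsilon\|_{L^\infty}\|\rho_\xi\|_{L^1},\qquad \|D^2_x W_\varepsilon\ast\rho_\xi\|_{L^\infty_x}\le \|D^2 W_\varepsilon\|_{L^\infty}\|\rho_\xi\|_{L^1}
\]
give boundedness plus Lipschitz continuity of $x\mapsto \nabla_x W_\varepsilon\ast\rho_\xi(x,t)$, uniformly in $t$, with a constant that blows up as $\varepsilon\to 0$ but is finite for fixed $\xi$. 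For $\overline{u}_\xi=N/(D+\nu)$ with $N(x,t)=\int v\phi_2^\delta(v)\,\phi_1^\varepsilon\ast f_\xi\,dv$ and $D(x,t)=\phi_1^\varepsilon\ast \rho_\xi$, the bounds $\|v\phi_2^\delta\|_{L^\infty}\le 2/\delta$, $\|\phi_1^\varepsilon\|_{W^{1,\infty}}<\infty$, and $f_\xi\in L^\infty_t(L^1)$ yield $N,D\in W^{1,\infty}_x$ uniformly in $t$, while $D+\nu\ge\nu>0$ keeps the quotient away from singularities. Hence $\overline{u}_\xi(\cdot,t)$ is bounded and Lipschitz in $x$ uniformly in $t\in[0,T]$, and the total drift $b$ is globally Lipschitz in $(x,v)$ with linear growth.

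For step (ii), this is the classical Ikeda--Watanabe / Protter setting: a time-measurable drift that is globally Lipschitz in the space variables together with a constant non-degenerate diffusion $\sqrt{2\sigma}\,\mathrm{Id}$ admits a unique strong global solution on $[0,T]$ for each deterministic or i.i.d.\ initial datum distributed as $f_0$. Applying this to each $i\in[N]$ separately produces $N$ independent copies $(\overline{X}_\xi^i,\overline{V}_\xi^i)_{t\in[0,T]}$.

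The only remaining subtlety, and what I expect to be the main point to argue carefully, is step (iii): showing that the law $\tilde f_\xi(\cdot,\cdot,t)$ of $(\overline{X}_\xi^i(t),\overline{V}_\xi^i(t))$ \emph{equals} the very $f_\xi$ used to define the SDE's coefficients. I will do this by applying It\^o's formula to $\varphi(\overline{X}_\xi^i,\overline{V}_\xi^i)$ for $\varphi\in\mathcal{C}_c^\infty$ and taking expectations; this produces, in weak form, exactly the linear Vlasov--Fokker--Planck equation with coefficients $\nabla_x W_\varepsilon\ast\rho_\xi$ and $\overline{u}_\xi$ treated as given, with initial datum $f_0$. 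Thus $\tilde f_\xi$ solves \eqref{pde2} with the prescribed (frozen) coefficients. Since $f_\xi$ also solves that same linear equation, and since the assumed uniqueness in $L^\infty(0,T;L^1\cap L^\infty)$ extends to this linear problem (the drift is bounded and Lipschitz in $(x,v)$ and the diffusion is constant, so uniqueness of weak $L^1\cap L^\infty$ solutions is standard), we conclude $\tilde f_\xi=f_\xi$. This closes the system and gives the unique global solution $(\overline{X}_\xi,\overline{V}_\xi,f_\xi)$.
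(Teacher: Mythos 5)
Your proposal is correct and follows essentially the same route as the paper: freeze the coefficients using the given $f_\xi$, solve the resulting linear SDE by standard Lipschitz well-posedness, apply It\^o's formula to show the law of the solution satisfies the weak form of the (frozen-coefficient) Fokker--Planck equation, and conclude $\tilde f_\xi = f_\xi$ by uniqueness. Your step (iii) is in fact a bit more careful than the paper's presentation in distinguishing that $\tilde f_\xi$ a priori satisfies the \emph{linear} equation with coefficients generated by $f_\xi$ rather than by itself, and that it is uniqueness of that linear problem which closes the argument, whereas the paper simply invokes uniqueness of \eqref{pde2}; this extra care is welcome but does not change the structure of the proof.
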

\begin{proof}
Let $f_\xi\in L^\infty\big(0,T;L^1 \cap L^\infty(\mathbb R^{2d})\big)$ be the unique solution of the regularized problem (\ref{pde2}) with the initial data $f_\xi(x,v,0)=f_0$. Then the following stochastic particles system 
	\begin{align*}
         \left\{\begin{aligned}
		d\widetilde{X}_\xi(t) =&\widetilde{V}_\xi(t) dt,\nonumber\\
		d\widetilde{V}_\xi(t)=& \sqrt{2\sigma}dB(t)-\gamma \widetilde{V}_\xi(t)dt
		-\beta\big(\widetilde{V}_\xi(t)- u_\xi(\widetilde{X}_\xi(t))\big)dt\nonumber\\
		 &- \lambda\Big(\nabla V(\widetilde{X}_\xi(t)) + \nabla W_\varepsilon* \rho_\xi(\widetilde{X}(t))\Big)dt
        ,\end{aligned}\right.
	\end{align*}
where 	\begin{eqnarray*}
\rho_\xi (\widetilde{X}(t)) = 
\int_{\mathbb{R}^d}
f_\xi(\widetilde{X}(t),v) dv,\qquad
u_\xi(\widetilde{X}(t)) = \frac{\int_{\mathbb{R}^d}v\phi_2^\delta(v)\cdot\phi_1^\varepsilon*f_\xi(\widetilde{X}(t),v) dv}{\phi_1^\varepsilon * \rho_\xi(\widetilde{X}(t)) + \nu},
\end{eqnarray*} 
has a unique global solution $(\widetilde{X}(t), \widetilde{V}(t))$ because the coefficients are globally Lipschitz for any fixed $\xi$. Denote by $\widetilde{f}_\xi$ as the probability density function of $(\widetilde{X}(t), \widetilde{V}(t))$, then it follows  from It\^{o}'s formula  that for any smooth test function $\varphi(x,v,t)\in C_c^\infty([0,T]\times \mathbb{R}^{2d})$, it holds
    \begin{align*}
    	&\varphi\big(\widetilde{X}_\xi(T),\widetilde{V}_\xi(T), T\big) - \varphi\big(\widetilde{X}_\xi(0),\widetilde{V}_\xi(0), 0\big) \\
    	=& \int_0^T \Big[-\partial_t\varphi\big(\widetilde{X}_\xi(t),\widetilde{V}_\xi(t), t\big)  
		- \widetilde{V}_\xi(t)\nabla_x \varphi\big(\widetilde{X}_\xi(t),\widetilde{V}_\xi(t), t\big)
		+ \gamma \widetilde{V}_\xi(t)\nabla_v\varphi\big(\widetilde{X}_\xi(t),\widetilde{V}_\xi(t), t\big)\\
 		&+ \Big(\lambda\big(\nabla V(\widetilde{X}_\xi(t)) + \nabla W_\varepsilon* \rho_\xi(\widetilde{X}_\xi(t))\big) 
		+\beta\big(\widetilde{V}_\xi(t)-u_\xi(\widetilde{X}_\xi(t))\big) \Big)\nabla_v \varphi\big(\widetilde{X}_\xi(t),\widetilde{V}_\xi(t), t\big) \\
		&- \sigma\Delta_v\varphi\big(\widetilde{X}_\xi(t),\widetilde{V}_\xi(t), t\big)\Big]dt 
		+ \sqrt{2\sigma}\int_0^T\nabla_v\varphi\big(\widetilde{X}_\xi(t),\widetilde{V}_\xi(t), t\big)dB(t),  
    \end{align*}
by taking the expectation, we get 
    \begin{align*}
	&\int_{\R^d}\int_{\R^d} \widetilde{f}_\xi(x,v,T) \varphi(x,v,T)dxdv - \int_{\R^d}\int_{\R^d} f_0 \varphi(x,v,0)dxdv \\
	=& \int_0^T \int_{\R^d} \int_{\R^d}\widetilde{f}_\xi(x,v,t) \Big[-\partial_t\varphi(x,v,t) 
	-v\nabla_x \varphi(x,v,t)+ \gamma v\nabla_v \varphi(x,v,t)\\
 	&+ \Big(\lambda(\nabla V + \nabla W_\varepsilon* \rho_\xi(x,t) )
	+\beta\big(v -  u_\xi(x,t) \big)\Big)\nabla_v \varphi(x,v,t) - \sigma\Delta_v\varphi(x,v,t) \Big]dxdvdt\,.
    \end{align*}
Thus $\widetilde f_\xi$ satisfies the weak formulation of (\ref{pde2}) also. Thus we have  $\widetilde{f}_\xi=f_\xi$ due to the uniqueness. 
% Further, since the above initial value problem has a unique global solution $(\widetilde{X}(t), \widetilde{V}(t))=(\overline{X}_\xi(t),\overline{V}_\xi(t))$, we obtain that the unique solution is $f_\xi$, i.e. the probability density of $\overline{X}_\xi(t)$. 
In other words, the unique solution to (\ref{sde2})  is given by $(\widetilde{X}_\xi(t),\widetilde{V}_\xi(t),f_\xi)$, which then can be denoted as $(\overline{X}_\xi(t),\overline{V}_\xi(t),f_\xi)$.
\end{proof}

\subsection{Convergence estimates for $N\to\infty$}
In the following subsection, we establish the error estimate between solutions of the stochastic moderately many-particle system (\ref{sde1}) and the mean-field stochastic system (\ref{sde2}). Prior to this analysis, we offer an estimate for the disparity between their respective local alignment terms.
\begin{lem}\label{lem3.2}
Let $\{(X_{\xi,N}^i(t), V_{\xi,N}^i(t))_{0\le t\le T}\}_{i=1}^N$ and $\{(\overline{X}_{\xi}^i(t), \overline{V}_{\xi}^i(t))_{0\le t\le T}\}_{i=1}^N$ be solutions of equations (\ref{sde1}) and (\ref{sde2}) up to $T$. Then the following estimate for the local alignment terms $u_\xi$ and $\overline{u}_\xi$ holds
\begin{align*}
 \int_{0}^T\mathbb{E}\big[\big|&u_\xi(X_{\xi,N}^i(t)) - \overline{u}_\xi(\overline{X}_{\xi}^i(t))\big|^2\big]dt
\le C\Big(\frac{1}{\delta^2\nu^4\varepsilon^{4d+2}}\int_0^T\mathbb{E}\big[|X_{\xi,N}^i(t) - \overline{X}_{\xi}^i(t)|^2\big] dt \\
&+\frac{1}{\nu^4\varepsilon^{4d}}\int_0^T\mathbb{E}\big[|V_{\xi,N}^i(t) - \overline{V}_{\xi}^i(t)|^2\big] dt + \frac{T}{N\delta^2\nu^4\varepsilon^{4d}}\Big),
\end{align*}
for any $i\in [N]$, where $C$ is a constant only depending on $ \| f_0\|_ {L^1(\mathbb{R}^d\times\mathbb{R}^d)}$.
\end{lem}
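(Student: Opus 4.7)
The plan is to reduce the estimate to controlling the difference of two empirical ratios through the elementary identity
\[
\frac{a}{b+\nu}-\frac{a'}{b'+\nu}=\frac{a-a'}{b+\nu}-\frac{a'(b-b')}{(b+\nu)(b'+\nu)},
\]
applied to $u_\xi(X^i_{\xi,N})=\frac{A_N(X^i_{\xi,N})}{B_N(X^i_{\xi,N})+\nu}$ and $\overline u_\xi(\overline X^i_\xi)=\frac{\bar A(\overline X^i_\xi)}{\bar B(\overline X^i_\xi)+\nu}$, where $A_N(x):=\frac{1}{N}\sum_j V^j_{\xi,N}\phi_2^\delta(V^j_{\xi,N})\phi_1^\varepsilon(x-X^j_{\xi,N})$, $B_N(x):=\frac{1}{N}\sum_j\phi_1^\varepsilon(x-X^j_{\xi,N})$, and $\bar A,\bar B$ are the corresponding mean-field expressions read off from \eqref{u2}. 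Both denominators are bounded below by $\nu$, while $|\bar A(\overline X^i_\xi)|\le \|v\phi_2^\delta\|_{L^\infty}\|\phi_1^\varepsilon*\rho_\xi\|_{L^\infty}\le C/(\delta\varepsilon^d)$ by mass conservation $\|\rho_\xi\|_{L^1}=\|f_0\|_{L^1}$. Consequently the pointwise squared difference is controlled by $\frac{2}{\nu^2}|A_N(X^i_{\xi,N})-\bar A(\overline X^i_\xi)|^2+\frac{C}{\nu^4\delta^2\varepsilon^{2d}}|B_N(X^i_{\xi,N})-\bar B(\overline X^i_\xi)|^2$, and the task splits into two structurally identical discrepancies.

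For each I would introduce the intermediate empirical averages built from the decoupled particles of \eqref{sde2} but evaluated at $\overline X^i_\xi$,
\[
\bar A_N(\overline X^i_\xi):=\frac{1}{N}\sum_{j=1}^N \overline V^j_\xi\phi_2^\delta(\overline V^j_\xi)\phi_1^\varepsilon(\overline X^i_\xi-\overline X^j_\xi),\quad \bar B_N(\overline X^i_\xi):=\frac{1}{N}\sum_{j=1}^N\phi_1^\varepsilon(\overline X^i_\xi-\overline X^j_\xi),
\]
and perform the triangle-inequality decomposition
\[
A_N(X^i_{\xi,N})-\bar A(\overline X^i_\xi)=\bigl[A_N(X^i_{\xi,N})-A_N(\overline X^i_\xi)\bigr]+\bigl[A_N(\overline X^i_\xi)-\bar A_N(\overline X^i_\xi)\bigr]+\bigl[\bar A_N(\overline X^i_\xi)-\bar A(\overline X^i_\xi)\bigr],
\]
together with its analogue for $B$. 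Term (I) is handled by the Lipschitz bound $\|\nabla\phi_1^\varepsilon\|_{L^\infty}\le C\varepsilon^{-d-1}$ combined with $\|v\phi_2^\delta\|_{L^\infty}\le 2/\delta$, producing a coefficient of order $\delta^{-2}\varepsilon^{-2d-2}$ in front of $|X^i_{\xi,N}-\overline X^i_\xi|^2$. Term (II), the pairwise coupling, is estimated by the Lipschitz continuity of $(v,y)\mapsto v\phi_2^\delta(v)\phi_1^\varepsilon(\overline X^i_\xi-y)$ in each argument combined with the sharp inequality $|\frac1N\sum_j a_j|^2\le \frac1N\sum_j|a_j|^2$; after taking expectations and invoking exchangeability of $\{(X^j_{\xi,N},V^j_{\xi,N}),(\overline X^j_\xi,\overline V^j_\xi)\}_{j=1}^N$, the $j$-sums collapse to $\E|X^i_{\xi,N}-\overline X^i_\xi|^2$ and $\E|V^i_{\xi,N}-\overline V^i_\xi|^2$ with the same Lipschitz prefactors.

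The delicate step is the third, genuinely mean-field term. Since the particles $\{(\overline X^j_\xi,\overline V^j_\xi)\}_{j=1}^N$ of \eqref{sde2} are i.i.d.\ with common law $f_\xi$ by Theorem \ref{thm3.1}, for every $j\ne i$ one has $\bar A(\overline X^i_\xi)=\E[F(\overline V^j_\xi,\overline X^j_\xi;\overline X^i_\xi)\mid\overline X^i_\xi]$ with $F(v,y;x):=v\phi_2^\delta(v)\phi_1^\varepsilon(x-y)$. Conditionally on $\overline X^i_\xi$, the summands $Y_j:=F(\overline V^j_\xi,\overline X^j_\xi;\overline X^i_\xi)-\bar A(\overline X^i_\xi)$ with $j\ne i$ are therefore centered and mutually independent, yielding the classical variance identity
\[
\E\Big|\frac{1}{N}\sum_{j\ne i}Y_j\Big|^2=\frac{1}{N^2}\sum_{j\ne i}\E|Y_j|^2\le \frac{\|F\|_{L^\infty}^2}{N}\le \frac{C}{N\delta^2\varepsilon^{2d}},
\]
while the isolated $j=i$ summand contributes only $O(1/N^2)$ and is absorbed. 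The same argument with $F$ replaced by $\phi_1^\varepsilon(x-y)$ and $\|F\|_{L^\infty}^2\le C\varepsilon^{-2d}$ controls the $B$ counterpart.

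Collecting the three contributions with the prefactors $1/\nu^2$ and $C/(\nu^4\delta^2\varepsilon^{2d})$ from the algebraic identity, integrating in $t\in[0,T]$, and relaxing a few subdominant factors $1/\nu^2$ or $1/\varepsilon^{2d}$ to the larger $1/\nu^4$, $1/\varepsilon^{4d}$ that appear in the statement, produces the asserted inequality. The main technical obstacle is the coupling step (II): the Lipschitz prefactors are large and the $N$-average must be dealt with via the sharp Cauchy--Schwarz inequality to preserve the eventual $1/N$ gain coming from (III). The concentration step (III) is conceptually standard but relies crucially on the exact i.i.d.\ structure of the intermediate system \eqref{sde2}, which is unavailable for the fully coupled system \eqref{sde1}; this is precisely why the mean-field particles are introduced as an intermediate comparison object rather than comparing \eqref{sde1} directly to the PDE \eqref{repde1}.
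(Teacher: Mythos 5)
Your proposal is correct and follows essentially the same route as the paper: bring the two ratios over a common denominator bounded below by $\nu^2$, split the numerator into an $A$-discrepancy and a $B$-discrepancy, telescope each through the intermediate empirical average over the uncoupled particles, handle the system-change terms by Lipschitz bounds on $\phi_1^\varepsilon$ and $v\phi_2^\delta(v)$ together with exchangeability, and control the empirical-versus-mean-field term by the conditional variance identity exploiting the i.i.d.\ structure of \eqref{sde2}. The only cosmetic difference is the grouping in the telescoping (you change the evaluation point first, then all particle data; the paper changes velocities first, then all positions), which does not affect the resulting coefficients.
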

\begin{proof}
Let $\forall\ i \in [N]$. From (\ref{u1}) and (\ref{u2}), $u_\xi$ and $\overline{u}_\xi$ can be rewritten as follows
\begin{align*}
u_\xi(X_{\xi,N}^i(t)) 
&=\frac{\frac{1}{N}\sum_{j=1}^N V_{\xi,N}^j(t)\phi_2^\delta(V_{\xi,N}^j(t))\cdot\phi_1^\varepsilon(X_{\xi,N}^i(t) - X_{\xi,N}^j(t) )}{\frac{1}{N}\sum_{j=1}^N \phi_1^\varepsilon(X_{\xi,N}^i(t) - X_{\xi,N}^j(t) ) + \nu},\\ 
\overline{u}_\xi(\overline{X}_{\xi}^i(t))
&= \frac{\int_{\mathbb{R}^d}v\phi_2^\delta(v)\cdot\phi_1^\varepsilon*f_\xi (\overline{X}_{\xi}^i(t))dv}{\phi_1^\varepsilon * \rho_\xi(\overline{X}_{\xi}^i(t)) + \nu}.
\end{align*}
By taking their  difference and applying the expectation lead to
\begin{align}\label{u-u}
&\int_0^T\E\big[\big|u_\xi(X_{\xi,N}^i(t)) - \overline{u}_\xi(\overline{X}_{\xi}^i(t))\big|^2\big]dt\nonumber\\
\le& \frac{1}{\nu^4}\int_0^T
\E\Big[\Big|\frac{1}{N}\sum_{j=1}^N V_{\xi,N}^j(t)\phi_2^\delta(V_{\xi,N}^j(t))\phi_1^\varepsilon(X_{\xi,N}^i(t) - X_{\xi,N}^j(t) )\Big(\phi_1^\varepsilon* \rho_\xi(\overline{X}_{\xi}^i(t)) + \nu\Big)\nonumber\\
&\qquad\qquad-\int_{\R^d}v\phi_2^\delta(v)\phi_1^\varepsilon*f_\xi(\overline{X}_{\xi}^i(t))dv \Big(\frac{1}{N}\sum_{j=1}^N \phi_1^\varepsilon(X_{\xi,N}^i(t) - X_{\xi,N}^j(t) ) + \nu\Big)\Big|^2\Big]dt\nonumber
\\
\le& \frac{2}{\nu^4}\int_0^T
\E\Big[\Big|\Big(\phi_1^\varepsilon* \rho_\xi(\overline{X}_{\xi}^i) + \nu\Big)
\Big(\frac{1}{N}\sum_{j=1}^N V_{\xi,N}^j\phi_2^\delta(V_{\xi,N}^j)\phi_1^\varepsilon(X_{\xi,N}^i - X_{\xi,N}^j )\nonumber\\
&\qquad\qquad\qquad\qquad\qquad\qquad\qquad\qquad\qquad-\int_{\R^d}v\phi_2^\delta(v)\phi_1^\varepsilon*f_\xi(\overline{X}_{\xi}^i)dv \Big)\Big|^2\Big]dt\nonumber\\
&+\frac{2}{\nu^4}\int_0^T
\E\Big[\Big|\int_{\R^d}v\phi_2^\delta(v)\phi_1^\varepsilon*f_\xi(\overline{X}_{\xi}^i)dv \Big(\phi_1^\varepsilon* \rho_\xi(\overline{X}_{\xi}^i) -\frac{1}{N}\sum_{j=1}^N \phi_1^\varepsilon(X_{\xi,N}^i - X_{\xi,N}^j ) \Big|^2\Big]dt\nonumber
\\
:=& \frac{2}{\nu^4}( I_1 +I_2).
\end{align}

The estimate for $I_1$ is
\begin{align*}
I_1 \le &\|\phi_1^\varepsilon*\rho_\xi + \nu\|_{L^\infty(\R^d)}^2\int_0^T\frac{1}{N^2}
\E\Big[\Big|\sum_{j=1}^N \Big(V_{\xi,N}^j(t)\phi_2^\delta(V_{\varepsilon,N}^j(t))\phi_1^\varepsilon\big(X_{\xi,N}^i(t) - X_{\xi,N}^j(t)\big) \nonumber\\
&\qquad\qquad\qquad\qquad\qquad- \int_{\R^d}v\phi_2^\delta(v)\phi_1^\varepsilon*f_\xi(\overline{X}_{\xi}^i(t)) dv\Big) \Big|^2\Big]dt.
\end{align*}
Here by Young's convolution inequation, we have
\begin{align*}
\|\phi_1^\varepsilon*\rho_\xi+\nu\|_{L^\infty(\R^{d})}^2 
&\le 2\|\phi_1^\varepsilon* \rho_\xi\|_{L^\infty(\R^{d})}^2 + 2\nu^2
\le \frac{2}{\varepsilon^{2d}}\| \rho_\xi\|_{L^1(\R^{d})}^2+2\nu^2,
\end{align*}
and 
\begin{align*}
&\int_{0}^T\frac{1}{N^2}
\mathbb{E}\Big[\Big|\sum_{j=1}^N \Big(V_{\xi,N}^j\phi_2^\delta(V_{\xi,N}^j)\phi_1^\varepsilon(X_{\xi,N}^i - X_{\xi,N}^j )- \int_{\mathbb{R}^d}v\phi_2^\delta(v)\cdot\phi_1^\varepsilon*f_\xi\big(\overline{X}_{\xi}^i\big) dv\Big) \Big|^2\Big]dt\\
\le&\frac{3}{N^2}\int_0^T
\mathbb{E}\Big[\Big|\sum_{j=1}^N \Big(V_{\xi,N}^j\phi_2^\delta(V_{\xi,N}^j)\phi_1^\varepsilon(X_{\xi,N}^i - X_{\xi,N}^j ) -  \overline{V}_\xi^j\phi_2^\delta(\overline{V}_\xi^j)\phi_1^\varepsilon(X_{\xi,N}^i-X_{\xi,N}^j)\Big)\Big|^2\Big]dt\\
&+\frac{3}{N^2}\int_0^T\mathbb{E}\Big[\Big| \sum_{j=1}^{N} \Big(\overline{V}_\xi^j\phi_2^\delta(\overline{V}_\xi^j)\phi_1^\varepsilon(X_{\xi,N}^i-X_{\xi,N}^j)- \overline{V}_\xi^j\phi_2^\delta(\overline{V}_\xi^j)\phi_1^\varepsilon(\overline{X}_{\xi}^i-\overline{X}_\xi^j)\Big) \Big|^2\Big]dt\\
&+\frac{3}{N^2}\int_0^T\mathbb{E}\Big[\Big|\sum_{j=1}^{N} \Big(\overline{V}_\xi^j\phi_2^\delta(\overline{V}_\xi^j)\phi_1^\varepsilon(\overline{X}_{\xi}^i-\overline{X}_\xi^j) - \int_{\mathbb{R}^d}v\phi_2^\delta(v)\cdot\phi_1^\varepsilon*f_\xi\big(\overline{X}_{\xi}^i\big) dv\Big) \Big|^2\Big]dt\\
:=&  3(I_1^1 + I_1^2 +I_1^3).
\end{align*}
Now, we derive the estimates for $I_1^1,\ I_1^2$ and $I_1^3$ separately. For the $I_1^1$,
\begin{align*}
I_1^1 =& \frac{1}{N^2}\int_0^T
\mathbb{E}\Big[\Big|\sum_{j=1}^N \phi_1^\varepsilon(X_{\xi,N}^i-X_{\xi,N}^j) \Big(V_{\xi,N}^j\phi_2^\delta(V_{\xi,N}^j) -  \overline{V}_\xi^j\phi_2^\delta(\overline{V}_\xi^j)\Big)\Big|^2\Big]dt\\
\le& \frac{1}{N^2}\|\phi_1^\varepsilon\|_{L^\infty(\R^d)}^2 \int_0^T N^2
\mathbb{E}\Big[\Big| \Big(V_{\xi,N}^i\phi_2^\delta(V_{\xi,N}^i) -  \overline{V}_{\xi}^i\phi_2^\delta(\overline{V}_{\xi}^i)\Big)\Big|^2\Big]dt\\
\le& \frac{1}{\varepsilon^{2d}} \|
\nabla (\cdot\phi_2^\delta(\cdot))\|_{L^\infty(\R^d)}^2
\int_0^T\mathbb{E} \big[\big|\overline{V}_{\xi}^i - V_{\xi,N}^i\big|^2\big]dt
\le \frac{C}{\varepsilon^{2d}}\int_0^T\mathbb{E} \big[\big|\overline{V}_{\xi}^i - V_{\xi,N}^i\big|^2\big]dt.
\end{align*}
The $I_1^2$ can be handled similarly,
\begin{align*}
I_1^2 &=\frac{1}{N^2} \int_0^T \mathbb{E}\Big[\Big| \sum_{j=1}^{N} \overline{V}_\xi^j\phi_2^\delta(\overline{V}_\xi^j)\Big(\phi_1^\varepsilon(X_{\xi,N}^i-X_{\xi,N}^j) - \phi_1^\varepsilon(\overline{X}_{\xi}^i-\overline{X}_\xi^j) \Big)\Big|^2\Big]dt\\
&\le \frac{1}{N^2}\| \cdot\phi_2^\delta(\cdot)\|_{L^\infty(\mathbb{R}^{d})}^2 \int_0^T\mathbb{E} \Big[\Big|\sum_{j=1}^{N} \Big(\phi_1^\varepsilon(X_{\xi,N}^i-X_{\xi,N}^j) - \phi_1^\varepsilon(\overline{X}_{\xi}^i-\overline{X}_\xi^j) \Big)\Big|^2\Big]dt\\
&\le \frac{4}{N^2\delta^2}\|\nabla\phi_1^\varepsilon\|_{L^\infty(\mathbb{R}^{d})}^2 \int_0^T\mathbb{E} \big[N^2\big|\overline{X}_{\xi}^i - X_{\xi,N}^i\big|^2\big]dt\\
&\le \frac{4}{\delta^2\varepsilon^{2d+2}} \int_0^T\mathbb{E} \big[\big|\overline{X}_{\xi}^i - X_{\xi,N}^i\big|^2\big]dt.
\end{align*}
The $I_1^3$ is estimated as follows
\begin{align*}
I_1^3
=& \frac{1}{N^2}\int_0^T\mathbb{E}\Big[\Big|\sum_{j=1}^{N} \Big(\overline{V}_\xi^j\phi_2^\delta(\overline{V}_\xi^j)\phi_1^\varepsilon(\overline{X}_{\xi}^i-\overline{X}_\xi^j)- \int_{\mathbb{R}^d}v\phi_2^\delta(v)\cdot\phi_1^\varepsilon*f_\xi\big(\overline{X}_{\xi}^i\big) dv\Big) \Big|^2\Big]dt\\
\le& \frac{1}{N^2}
\int_0^T \mathbb{E}\Big[\sum_{j=1}^{N} \Big(\overline{V}_\xi^j\phi_2^\delta(\overline{V}_\xi^j)\phi_1^\varepsilon(\overline{X}_{\xi}^i-\overline{X}_\xi^j)- \int_{\mathbb{R}^d}v\phi_2^\delta(v)\cdot\phi_1^\varepsilon*f_\xi\big(\overline{X}_{\xi}^i\big) dv\Big) \nonumber\\
&\qquad\qquad\quad\sum_{l=1}^{N} \Big(\overline{V}_\xi^l\phi_2^\delta(\overline{V}_\xi^l)\phi_1^\varepsilon(\overline{X}_{\xi}^i-\overline{X}_\xi^l)- \int_{\mathbb{R}^d}v\phi_2^\delta(v)\cdot\phi_1^\varepsilon*f_\xi\big(\overline{X}_{\xi}^i\big) dv\Big) \Big] dt\\ 
\le\ &\frac{1}{N^2}
\sum_{j=1}^{N}\sum_{l=1}^{N} \int_0^T \mathbb{E}\Big[\Big(\overline{V}_\xi^j\phi_2^\delta(\overline{V}_\xi^j)\phi_1^\varepsilon(\overline{X}_{\xi}^i-\overline{X}_\xi^j(t))- \int_{\mathbb{R}^d}v\phi_2^\delta(v)\cdot\phi_1^\varepsilon*f_\xi\big(\overline{X}_{\xi}^i\big) dv\Big)\nonumber\\
&\qquad\qquad\qquad\Big(\overline{V}_\xi^l\phi_2^\delta(\overline{V}_\xi^l)\phi_1^\varepsilon(\overline{X}_{\xi}^i-\overline{X}_\xi^l)- \int_{\mathbb{R}^d}v\phi_2^\delta(v)\cdot\phi_1^\varepsilon*f_\xi\big(\overline{X}_{\xi}^i\big) dv\Big) \Big] dt,
\end{align*}
where for $j\ne l$ the expectation is zero. Hence,
\begin{align*}
|I_1^{3}| &\le\frac{1}{N^2}\sum_{j=1}^{N}\int_0^T \mathbb{E}\Big[\Big|\overline{V}_\xi^j\phi_2^\delta(\overline{V}_\xi^j)\phi_1^\varepsilon(\overline{X}_{\xi}^i-\overline{X}_\xi^j)- \int_{\mathbb{R}^d}v\phi_2^\delta(v)\cdot\phi_1^\varepsilon*f_\xi\big(\overline{X}_{\xi}^i\big) dv\Big|^2\Big]dt \\
&\le\frac{1}{N^2}\| \cdot\phi_2^\delta(\cdot)\|_{L^\infty(\R^d)}^2\sum_{j=1}^{N}\int_0^T \mathbb{E}\Big[\Big|\phi_1^\varepsilon(\overline{X}_{\xi}^i-\overline{X}_\xi^j)+\phi_1^\varepsilon*\rho_\xi\big(\overline{X}_{\xi}^i\big)\Big|^2\Big]dt\le \frac{4T}{N\delta^2\varepsilon^{2d}}\| f_\xi\|_{L^1(\R^{2d})}^2,
\end{align*}
while exploiting the fact that 
$$\| \phi_1^\varepsilon*\rho_\xi
\|_{L^\infty(\R^{d})} \leq \|\phi_1^\varepsilon\|_{L^\infty(\R^{d})}\| \rho_\xi\|_{L^1(\R^{d})} \leq \frac{1}{\varepsilon^d}\| \rho_\xi\|_{L^1(\R^{d})}.$$
Thus,
\begin{align}\label{I1}
I_1  \le3(\frac{2}{\varepsilon^{2d}}\| \rho_\xi\|_{L^1(\R^{d})}^2+2\nu^2)\Big(\frac{1}{\delta^2\varepsilon^{2d+2}}&\int_0^T\mathbb{E}\big[|X_{\xi,N}^i(t) - \overline{X}_{\xi}^i(t)|^2\big] dt \nonumber\\
+&\frac{1}{\varepsilon^{2d}}\int_0^T\mathbb{E}\big[|V_{\xi,N}^i(t) - \overline{V}_{\xi}^i(t)|^2\big] dt + \frac{4T\| f_\xi\|_{L^1(\R^{2d})}^2}{N\delta^2\varepsilon^{2d}}\Big).
\end{align}

On the other hand, the estimate for $I_2$ is
\begin{align*}
I_2 \le & 
\Big\|\int_{\R^d} v\phi_2^\delta(v)\phi_1^\varepsilon*f_\xi dv\Big\|_{L^\infty(\R^d)}^2\int_0^T\frac{1}{N^2}\E\Big[\Big|\sum_{j=1}^N\Big(\phi_1^\varepsilon * \rho_\xi(\overline{X}_{\xi}^i(t)) -  \phi_1^\varepsilon(X_{\xi,N}^i(t) - X_{\xi,N}^j(t) )\Big) \Big|^2\Big]dt.
\end{align*}
Here by the Young's convolution inequation, we have
\begin{align*}
\Big\|\int_{\R^d} v\phi_2^\delta(v)\cdot\phi_1^\varepsilon*f_\xi dv \Big\|_{L^\infty(\R^d)} 
&\le \| \cdot\phi_2^\delta(\cdot)\|_{L^\infty(\R^d)} \|\phi_1^\varepsilon* \rho_\xi  \|_{L^\infty(\R^{d})}\\
&\le \frac{2}{\delta} \|\phi_1^\varepsilon\|_{L^\infty(\R^d)}\| \rho_\xi\|_{L^1(\R^{d})}\le \frac{2}{\delta\varepsilon^{d}} \| \rho_\xi\|_{L^1(\R^{d})},
\end{align*}
and
\begin{align*}
&\int_0^T\frac{1}{N^2}\E\Big[\Big|\sum_{j=1}^N\Big(\phi_1^\varepsilon * \rho_\xi(\overline{X}_{\xi}^i(t)) -  \phi_1^\varepsilon(X_{\xi,N}^i(t) - X_{\xi,N}^j(t) )\Big) \Big|^2\Big]dt\\
\le&\frac{2}{N^2}\int_{0}^T
\mathbb{E}\Big[\Big|\sum_{j=1}^N\Big(\phi_1^\varepsilon*\rho_\xi(\overline{X}_{\xi}^i(t))- \phi_1^\varepsilon(\overline{X}_{\xi}^i(t)-\overline{X}_\xi^j(t))\Big) \Big|^2\Big]\\ 
&\qquad+\mathbb{E}\Big[\Big|\sum_{j=1}^N\Big( \phi_1^\varepsilon(\overline{X}_{\xi}^i(t)-\overline{X}_\xi^j(t))- \phi_1^\varepsilon(X_{\xi,N}^i(t)-X_{\xi,N}^j(t))\Big)\Big|^2\Big]dt\\
:=&\ 2(I_2^1+I_2^2).
\end{align*}
The first term is estimated as follows
	\begin{align*}
	I_2^1
		\le\ & \frac{1}{N^2}\int_0^T \mathbb{E}\Big[\sum_{j=1}^N\Big(\phi_1^\varepsilon*\rho_\xi(\overline{X}_{\xi}^i(t))-  \phi_1^\varepsilon(\overline{X}_{\xi}^i(t)-\overline{X}_\xi^j(t))\Big)\nonumber\\
		&\qquad\qquad\qquad\sum_{l=1}^{N}\Big(\phi_1^\varepsilon*\rho_\xi(\overline{X}_{\xi}^i(t))-  \phi_1^\varepsilon(\overline{X}_{\xi}^i(t)-\overline{X}_\xi^l(t))\Big)\Big] dt\\
		=&\ \frac{1}{N^2}\sum_{j=1}^{N}\sum_{l=1}^{N} \int_0^T \mathbb{E}\Big[\Big(\phi_1^\varepsilon*\rho_\xi(\overline{X}_{\xi}^i(t))-  \phi_1^\varepsilon(\overline{X}_{\xi}^i(t)-\overline{X}_\xi^j(t))\Big)\nonumber\\
		&\qquad\qquad\qquad\Big(\phi_1^\varepsilon*\rho_\xi(\overline{X}_{\xi}^i(t))-  \phi_1^\varepsilon(\overline{X}_{\xi}^i(t)-\overline{X}_\xi^l(t))\Big)\Big] dt,
	\end{align*}
where for $j\ne l$ the expectation is zero. Hence,
$$|I_2^1| \le \frac{1}{N^2}\sum_{j=1}^{N} \int_0^T \mathbb{E}\Big[\Big|\phi_1^\varepsilon*\rho_\xi(\overline{X}_{\xi}^i(t))-  \phi_1^\varepsilon(\overline{X}_{\xi}^i(t)-\overline{X}_\xi^j(t))\Big|^2\Big]dt \le \frac{T}{N\varepsilon^{2d}}\| f_\xi\|_{L^1(\R^{2d})}^2. $$
The second term is estimated as follow
\begin{align*}
I_2^2 
\le&\frac{2}{N^2}\|\nabla\phi_1^\varepsilon\|_{L^\infty(\R^{d})}^2\int_0^TN^2\mathbb{E}\big[\big|(X_{\varepsilon,N}^{\delta,i}(t) - \overline{X}_{\xi}^i(t))\big|^2\big] dt\le
\frac{2}{\varepsilon^{2d+2}}\int_0^T\mathbb{E}\big[\big|X_{\xi,N}^i(t) - \overline{X}_{\xi}^i(t)\big|^2\big] dt.
\end{align*}
Thus,
\begin{align}\label{I2}
I_2\le C\frac{4}{\delta^2\varepsilon^{2d}} \| \rho_\xi\|_{L^1(\R^{d})}\Big(\frac{T}{N\varepsilon^{2d}}\| f_\xi\|_{L^1(\R^{2d})}^2+\frac{1}{\varepsilon^{2d+2}}\int_0^T\mathbb{E}\big[|X_{\xi,N}^i(t) - \overline{X}_{\xi}^i(t)|^2\big] dt\Big).
\end{align}

Finally, bring \eqref{I1} and \eqref{I2} into the inequality (\ref{u-u}), we have
\begin{align*}
 \int_{0}^T\mathbb{E}\big[\big|&u_\xi(X_{\xi,N}^i(t)) - \overline{u}_\xi(\bar{X}_\xi^i(t))\big|^2\big]dt
\le C\Big(\frac{1}{\delta^2\nu^4\varepsilon^{4d+2}}\int_0^T\mathbb{E}\big[|X_{\xi,N}^i(t) - \overline{X}_{\xi}^i(t)|^2\big] dt \\
&+\frac{1}{\nu^4\varepsilon^{4d}}\int_0^T\mathbb{E}\big[|V_{\xi,N}^i(t) - \overline{V}_{\xi}^i(t)|^2\big] dt + \frac{T}{N\delta^2\nu^4\varepsilon^{4d}}\Big),
\end{align*}
where $C$ is a constant only depending on $ \| f_0\|_ {L^1}$.

\end{proof}

\subsection{Error estimations of the particles solutions}In this section, by Lemma \ref{lem3.2} and discussion of Section 2, we are able to state the main theorem as following.
\begin{thm}\label{thm3.2}
Let $\{(X_{\xi,N}^i(t), V_{\xi,N}^i(t))_{0\le t\le T}\}_{i=1}^N$ and $\{(\overline{X}_{\xi}^i(t), \overline{V}_{\xi}^i(t))_{0\le t\le T}\}_{i=1}^N$ be solutions of equations (\ref{sde1}) and (\ref{sde2}) up to some $T>0$. Then for any $\gamma, \lambda, \beta >0$ and fixed $0<\alpha\ll1$, $1/(\delta^2\nu^4\varepsilon^{4d+2})\sim \ln( N^\alpha)$, it holds that
\begin{align*}
    \sup\limits_{0<t<T}\sup \limits_{i=1\dots N}  \mathbb{E}\left[|X_{\xi,N}^i(t) - \overline{X}_{\xi}^i(t)|^2+|V_{\xi,N}^i(t) - \overline{V}_{\xi}^i(t)|^2\right] &\leq
\frac{C T^3\ln N^\alpha}{N}\big(1+ T^3 N^{T^3\alpha }\ln N^{\alpha}\big)\,,
    \end{align*}
where $C$ is a constant depending on $\gamma$, $\lambda$, $\beta$ and $ \| f_0\|_ {L^1(\mathbb{R}^d\times\mathbb{R}^d)}$.
\end{thm}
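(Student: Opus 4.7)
\medskip

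\noindent\textbf{Proof proposal for Theorem \ref{thm3.2}.}

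The plan is to exploit the fact that the two systems (\ref{sde1}) and (\ref{sde2}) are driven by the same Brownian motions $\{B^i\}_{i=1}^N$, so the diffusion terms cancel when one subtracts them. Set $\Delta X^i(t) := X_{\xi,N}^i(t) - \overline{X}_{\xi}^i(t)$ and $\Delta V^i(t) := V_{\xi,N}^i(t) - \overline{V}_{\xi}^i(t)$; these satisfy deterministic-in-$\omega$ integral equations. First, I would write
\begin{align*}
\Delta X^i(t) &= \int_0^t \Delta V^i(s)\,ds, \\
\Delta V^i(t) &= \int_0^t \Big[-(\gamma+\beta)\Delta V^i - \lambda\bigl(\nabla_x V(X_{\xi,N}^i)-\nabla_x V(\overline{X}_{\xi}^i)\bigr) + \beta\bigl(u_\xi(X_{\xi,N}^i) - \overline{u}_\xi(\overline{X}_{\xi}^i)\bigr) - \lambda\,\mathcal{F}^i(s)\Big]\,ds,
\end{align*}
where $\mathcal{F}^i(s) := \frac{1}{N}\sum_{j\ne i}\nabla_x W_\varepsilon(X_{\xi,N}^i-X_{\xi,N}^j) - \nabla_x W_\varepsilon\ast\rho_\xi(\overline{X}_{\xi}^i)$. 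Squaring and applying the Cauchy--Schwarz inequality in time turns each line into an inequality of the form $|\Delta X^i(t)|^2\le t\int_0^t|\Delta V^i|^2 ds$ and a similar one for $|\Delta V^i(t)|^2$, from which (after taking expectations and using $\nabla_x V(x)=x$) I would build an integral inequality for $\Phi(t):=\sup_{i\in[N]}\mathbb{E}\bigl[|\Delta X^i(t)|^2+|\Delta V^i(t)|^2\bigr]$.

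The next step is to treat the two nonlocal differences. For $\mathcal{F}^i$, I would split into a Lipschitz part and a law-of-large-numbers part:
\begin{align*}
\mathcal{F}^i(s) = \frac{1}{N}\sum_{j\ne i}\bigl[\nabla_x W_\varepsilon(X_{\xi,N}^i-X_{\xi,N}^j)-\nabla_x W_\varepsilon(\overline{X}_{\xi}^i-\overline{X}_{\xi}^j)\bigr] + \Bigl[\frac{1}{N}\sum_{j\ne i}\nabla_x W_\varepsilon(\overline{X}_{\xi}^i-\overline{X}_{\xi}^j) - \nabla_x W_\varepsilon\ast\rho_\xi(\overline{X}_{\xi}^i)\Bigr].
\end{align*}
The first piece is controlled by $\|D^2 W_\varepsilon\|_\infty^2\lesssim \varepsilon^{-(d+2)}$ together with exchangeability of the laws, giving a bound by $C\varepsilon^{-(d+2)}\Phi(s)$. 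The second piece vanishes in expectation conditional on $\overline{X}_{\xi}^i$ because the $\{\overline{X}_{\xi}^j\}_{j\ne i}$ are i.i.d.\ with density $\rho_\xi$; its second moment is therefore controlled by $N^{-1}\|\nabla_x W_\varepsilon\|_\infty^2\lesssim (N\varepsilon^d)^{-1}$ using \eqref{W}. For the local alignment difference I would invoke Lemma \ref{lem3.2}, noting that its proof is actually pointwise in $t$, so I can read it as a pointwise bound
\[
\mathbb{E}\bigl[|u_\xi(X_{\xi,N}^i(s))-\overline{u}_\xi(\overline{X}_{\xi}^i(s))|^2\bigr] \le C\Bigl(\tfrac{1}{\delta^2\nu^4\varepsilon^{4d+2}}\mathbb{E}|\Delta X^i(s)|^2 + \tfrac{1}{\nu^4\varepsilon^{4d}}\mathbb{E}|\Delta V^i(s)|^2 + \tfrac{1}{N\delta^2\nu^4\varepsilon^{4d}}\Bigr).
\]

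Combining these gives, after re-inserting into the Cauchy--Schwarz-squared integral equations, an inequality of the form
\[
\Phi(t) \le C\,t\,\kappa(\xi)\int_0^t\Phi(s)\,ds + \frac{C\,t^2}{N\,\delta^2\nu^4\varepsilon^{4d}},\qquad \kappa(\xi):=\frac{1}{\delta^2\nu^4\varepsilon^{4d+2}},
\]
(with the iteration between the $\Delta X$-estimate and the $\Delta V$-estimate producing additional $T$-factors). Gr\"onwall's inequality then yields
\[
\Phi(t) \le \frac{C\,T^3\kappa(\xi)}{N}\,\exp\bigl(C\,T^2\kappa(\xi)\bigr),
\]
so that the choice $\kappa(\xi)\sim\ln(N^\alpha)$ converts the exponential into the polynomial factor $N^{C T^2 \alpha}$, matching the stated error bound once all the $T$-dependent prefactors from the Cauchy--Schwarz iterations are tracked.

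The main obstacle I expect is the book-keeping: one has to route the single $\nu^{-4}$ factor and the various $\varepsilon^{-(d+2)}, \varepsilon^{-d}$ weights through both nonlocal estimates so that the worst constant $\kappa(\xi)$ really is $\sim 1/(\delta^2\nu^4\varepsilon^{4d+2})$, and verify that every other coefficient (notably the one from the LLN piece and from the Lipschitz piece of $\mathcal{F}^i$) is dominated by $\kappa(\xi)$. A secondary subtlety is justifying that Lemma \ref{lem3.2} can be applied pointwise in $t$ rather than only after the time integration displayed in the statement; this amounts to rereading its proof and observing that the independence argument used for the $I_1^3$-term is valid at each fixed $t$.
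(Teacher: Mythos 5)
Your proposal follows essentially the same strategy as the paper's proof: write the difference system in integral form, apply Cauchy--Schwarz in time to obtain integral inequalities for $\mathbb{E}|\Delta X^i|^2$ and $\mathbb{E}|\Delta V^i|^2$, split the interaction-force difference into a Lipschitz part (controlled by $\|D^2W_\varepsilon\|_\infty^2 \lesssim \varepsilon^{-(d+2)}$) and a law-of-large-numbers part (whose variance is $O((N\varepsilon^d)^{-1})$ by the i.i.d.\ structure of $\{\overline X_\xi^j\}$), handle the alignment difference by Lemma~\ref{lem3.2}, close with Gr\"onwall, and convert the exponential to a polynomial factor via the logarithmic choice of $\xi$. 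These are precisely the steps the paper takes, including the identification of $\kappa(\xi)=1/(\delta^2\nu^4\varepsilon^{4d+2})$ as the dominant coefficient in the Gr\"onwall exponent. The one small remark worth making: your plan to reread Lemma~\ref{lem3.2} as a pointwise-in-$t$ estimate is unnecessary --- the paper applies it directly in its integrated form with the integration endpoint $t$ in place of $T$ (the lemma's proof is uniform in the upper limit), which is all that the iterated Gr\"onwall step needs.
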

\begin{proof}
Let $\forall\ i \in [N]$. 
By taking the difference of two problems (\ref{sde1}) and (\ref{sde2}), we obtain
    \begin{align*}	
\mathcal{M}^i(t) =|X_{\xi,N}^i(t) - \overline{X}_{\xi}^i(t)|^2
	\le& t\int_0^t  |V_{\xi,N}^i(s) - \overline{V}_{\xi}^i(s)|^2ds, \\
\mathcal{N}^i(t) = |V_{\xi,N}^i(t) - \overline{V}_{\xi}^i(t)|^2
	\le& t\int_0^t  \Big|(\beta+\gamma)\big( \overline{V}_{\xi}^i(s)-V_{\xi,N}^i(s) \big)+\lambda\nabla V\big(\overline{X}_{\xi}^i(s)-X_{\xi,N}^i(s)\big)\\
   	&+ \frac{\lambda}{N} \sum_{j=1}^N\Big(\nabla W_\varepsilon* \rho_\xi\big(\overline{X}_{\xi}^i(s)\big) - \nabla W_\varepsilon \big (X_{\xi,N}^i(s) - X_{\xi,N}^j(s)\big)\Big) \\
   	&+ \beta\Big( u_\xi\big(X_{\xi,N}^i(s)\big)-\overline{u}_\xi\big(\overline{X}_{\xi}^i(s)\big)\Big)\Big|^2 ds.  
    \end{align*}
    
Applying the expectation leads to
\begin{align*}
\mathbb{E}[\mathcal{M}^i(t)] 
&\le t\int_0^t  \mathbb{E}[|V_{\xi,N}^i(s) - \overline{V}_{\xi}^i(s)|^2]ds \le t\int_0^t \mathbb{E}[\mathcal{N}^i(s)]ds,
\end{align*}
and
\begin{align}\label{N}
\mathbb{E}[\mathcal{N}^i(t)]  \leq& \ 4(\beta+\gamma)^2t \int_0^t \mathbb{E}[|\overline{V}_{\xi}^i(s) - V_{\xi,N}^i(s)|^2]ds
 + 4\beta^2t \int_0^t \mathbb{E}[ | u_\xi(X_{\xi,N}^i(s))-\overline{u}_\xi(\overline{X}_{\xi}^i(s)) |^2]ds\nonumber\\
	&+4\lambda^2 t \int_0^t \mathbb{E}\big[\big|\nabla V\big(  \overline{X}_{\xi}^i(s)\big)-\nabla V\big(X_{\xi,N}^i (s)\big)\big|^2\big]ds\nonumber\\
	&+ 4\frac{\lambda^2}{N^2}  t\int_0^t \mathbb{E}\big[ \Big|\sum_{j=1}^{N}\Big(\nabla W_\varepsilon*\rho(\overline{X}_{\xi}^i(s),s ) - \nabla W_\varepsilon\big(X_{\xi,N}^i(s) - X_{\xi,N}^j(s)\big)\Big)\Big|^2\big]ds\nonumber\\
:=&\ \ 4(\beta+\gamma)^2t\int_0^t \mathbb{E}[\mathcal{N}^i(s)]ds + 4\beta^2tJ_1 +4\lambda^2t  J_2+4\lambda^2t  J_3.
	\end{align}  

By Lemma \ref{lem3.2}, $J_1$ can be estimated 
\begin{align}\label{J1}
J_1
\le &\ C \Big(\frac{1}{\delta^2\nu^4\varepsilon^{4d+2}}\int_0^t\mathbb{E}[\mathcal{M}^i(s)] ds +\frac{1}{\nu^4\varepsilon^{4d}}\int_0^t\mathbb{E}[\mathcal{N}^i(s)] ds +\frac{t}{N\delta^2\nu^4\varepsilon^{4d}}\Big),
\end{align}
where $C$ is a constant only depending on $ \| f_0\|_ {L^1(\mathbb{R}^{2d})}$. For $J_2$, simply,
\begin{align}\label{J2}
J_2\le& \|\nabla(\nabla V)\|_{L^\infty(\R^d)}^2\int_0^t \mathbb{E}\big[\big| \overline{X}_{\xi}^i(s)-X_{\xi,N}^i(s)\big|^2\big]ds\le C\int_0^t \mathbb{E}[\mathcal{M}^i(s)]ds,
\end{align}

The $J_3$ can be handled
    \begin{align*}
J_3\le& \frac{3}{N^2} \int_0^t \mathbb{E}\Big[\Big|\sum_{j=1}^N \Big(\nabla W_\varepsilon\big(X_{\xi,N}^i(s)-X_{\xi,N}^j(s)\big) - \nabla W_\varepsilon\big(X_{\xi,N}^i(s)-\overline{X}_\xi^j(s)\big)\Big)\Big|^2\Big] \\ 
        &\qquad+ \mathbb{E}\Big[\Big|\sum_{j=1}^N \Big(\nabla W_\varepsilon\big(X_{\xi,N}^i(s)-\overline{X}_\xi^j(s)\big) - \nabla W_\varepsilon\big(\overline{X}_{\xi}^i(s)-\overline{X}_\xi^j(s)\big)\Big)\Big|^2\Big]\\
        &\qquad+ \mathbb{E}\Big[\Big|\sum_{j=1}^N \Big(\nabla W_\varepsilon(\overline{X}_{\xi}^i(s)-\overline{X}_\xi^j(s)) - \nabla W_\varepsilon*\rho(\overline{X}_{\xi}^i(s),s)\Big)\Big|^2\Big]ds \\
		:=&3(J_3^1 + J_3^2 + J_3^3).
\end{align*}
Now, we derive the estimates for $J_3^1$, $J_3^2$ and $J_3^3$ separately. For the $J_3^1$,
	\begin{align*}
		J_3^1 &\le \frac{1}{N^2}\int_0^t \| \nabla(\nabla W_\varepsilon) \|_{L^\infty(\R^d)}^2\mathbb{E}\Big[\Big(\sum_{j=1}^{N}|X_{\xi,N}^j(s)-\overline{X}_\xi^j(s)|\Big)^2\Big] ds\\
		&\le \frac{1}{N\varepsilon^{d+2}}\int_0^t \mathbb{E}\big[ \sum_{j=1}^{N}\big|X_{\xi,N}^j(s)-\overline{X}_\xi^j(s)\big|^2\big] ds
		\le \frac{1}{\varepsilon^{d+2}}\int_0^t \mathbb{E}[\mathcal{M}^i(s)]ds,
	\end{align*}
where $\|\nabla(\nabla W_\varepsilon) \|_{L^\infty(\R^d)}\le\varepsilon^{-(d+2)/2}$. The $J_3^2$ can be handled
	\begin{align*}
		J_3^2 &\le \frac{1 }{N^2}\int_0^t\| \nabla(\nabla W_\varepsilon) \|_{L^\infty(\R^d)}^2\mathbb{E}\Big[\Big(\sum_{j=1}^{N}\big|X_{\xi,N}^i(s)-\overline{X}_{\xi}^i(s)\big|\Big)^2\Big] ds\\
		&\le \frac{1}{N^2\varepsilon^{d+2}}\int_0^t \mathbb{E}\big[N^2 |X_{\xi,N}^i(s)-\overline{X}_{\xi}^i(s)|^2\big] ds
		\le  \frac{1}{\varepsilon^{d+2}} \int_0^t \mathbb{E}[\mathcal{M}^i(s)]ds.
	\end{align*}
The $J_3^3$ is estimated as follows
	\begin{align*}
		J_3^3
		=& \frac{1}{N^2}\int_{0}^t
		\mathbb{E}\Big[\Big|\sum_{j=1}^N \Big(\nabla W_\varepsilon(\overline{X}_{\xi}^i(s)-\overline{X}_\xi^j(s)) - \nabla W_\varepsilon*\rho(\overline{X}_{\xi}^i(s),s)\Big)\Big|^2\Big]ds\\
		\le\ & \frac{1}{N^2}\int_0^T \mathbb{E}\Big[\sum_{j=1}^N\Big(\nabla W_\varepsilon(\overline{X}_{\xi}^i(s)-\overline{X}_\xi^j(s)) - \nabla W_\varepsilon*\rho(\overline{X}_{\xi}^i(s)) \Big)\\
		&\qquad\qquad\qquad\qquad\sum_{l=1}^{N}\Big( \nabla W_\varepsilon(\overline{X}_{\xi}^i(s)-\overline{X}_\xi^l(s))-\nabla W_\varepsilon*\rho(\overline{X}_{\xi}^i(s))\Big)\Big] dt\\
		=&\ \frac{1}{N^2}\sum_{j=1}^{N}\sum_{l=1}^{N} \int_0^t \mathbb{E}\Big[\Big(\nabla W_\varepsilon(\overline{X}_{\xi}^i(s)-\overline{X}_\xi^j(s)) - \nabla W_\varepsilon*\rho(\overline{X}_{\xi}^i(s))\Big) \\
		&\qquad\qquad\qquad\qquad\Big(\nabla W_\varepsilon(\overline{X}_{\xi}^i(s)-\overline{X}_\xi^l(s)) - \nabla W_\varepsilon*\rho(\overline{X}_{\xi}^i(s)) \Big)\Big] ds,
	\end{align*}
where for $j\ne l$ the expectation is zero. Hence,
\begin{align*}
|J_3^3| \le& \frac{1 }{N^2}\sum_{j=1}^{N} \int_0^t \mathbb{E}\Big[\Big|\nabla W_\varepsilon(\overline{X}_{\xi}^i(s)-\overline{X}_\xi^j(s)) - \nabla W_\varepsilon*\rho(\overline{X}_{\xi}^i(s),s)\Big|^2\Big]ds 
\le \frac{t\parallel f\parallel_{L^1(\R^{2d})}^2}{N\varepsilon^{d}},
\end{align*}
while exploiting the fact that 
$$\|\nabla W_\varepsilon*\rho\|_{L^\infty(\R^d)} \le \|\nabla W_\varepsilon\|_{L^\infty(\R^d)}\|\rho\parallel_{L^1(\R^d)} \le\frac{1}{\varepsilon^{d/2}} \| f\|_{L^1(\R^{2d})}.$$ 
Combining $J_3^1$, $J_3^2$ and $J_3^3$, we have
\begin{align}\label{II}
J_3 \le  \frac{2}{\varepsilon^{d+2}} \int_0^t \mathbb{E}[\mathcal{M}^i(s)]ds+ \frac{t\parallel f\parallel_{L^1(\R^{2d})}^2}{N\varepsilon^{d}}.
\end{align}

Thus, bring $J_1$, $J_2$ and $J_3$ into the inequality (\ref{N}), we can obtain
\begin{align*}
\sup \limits_{i=1\dots N}\mathbb{E}[\mathcal{N}^i(t)] &\leq   C\Big(t\big(\frac{1}{\delta^2\nu^4\varepsilon^{4d+2}} +\frac{1}{\varepsilon^{d+2}}+1\big) \int_0^t\sup \limits_{i=1\dots N}\mathbb{E}\big[\mathcal{M}^i(s)\big] ds\\
&\qquad+ t\big(\frac{1}{\nu^4\varepsilon^{4d}}+1\big)
\int_0^t\sup \limits_{i=1\dots N}\mathbb{E}\big[\mathcal{N}^i(s)\big] ds
+ t^2\big(\frac{1}{N\delta^2\nu^4\varepsilon^{4d}} + \frac{1}{N\varepsilon^{d}}\big)\Big)\\
&\leq   C\Big( \frac{t^2}{\delta^2\nu^4\varepsilon^{4d+2}} 
\int_0^t\sup \limits_{i=1\dots N}\mathbb{E}\big[\mathcal{N}^i(s)\big] ds+\frac{t^2}{N\delta^2\nu^4\varepsilon^{4d}} \Big),
\end{align*}
where $C$ is a constant depending on $\gamma$, $\lambda$, $\beta$ and $\| f_0\|_ {L^1(\R^{2d})}$. By the $\mathrm{Gr\ddot{o}nwall}$'s inequation and taking the supremum in time on both sides, we get
    \begin{align*}
 \sup\limits_{0<t<T}\sup \limits_{i=1\dots N}\mathbb{E}[\mathcal{N}^i(t)] &\leq \frac{CT^2}{N\delta^2\nu^4\varepsilon^{4d}}\Big(1+\frac{T^3}{\delta^2\nu^4\varepsilon^{4d+2}}
\exp\frac{T^3}{\delta^2\nu^4\varepsilon^{4d+2}}
\Big).
    \end{align*}
Further, we have the following estimate
    \begin{align*}
  \sup\limits_{0<t<T}\sup \limits_{i=1\dots N}  \mathbb{E}[\mathcal{M}^i(t)] &\le T\int_0^T\mathbb{E}[\mathcal{N}^i(s)]ds
		\leq  \frac{CT^3}{N\delta^2\nu^4\varepsilon^{4d}}\Big(1+\frac{T^3}{\delta^2\nu^4\varepsilon^{4d+2}}
\exp\frac{T^3}{\delta^2\nu^4\varepsilon^{4d+2}}
\Big).
    \end{align*}
    
Finally, for any $0<\alpha \ll 1$, we can choose $\varepsilon$, $\nu$ and $\delta$ so small that $1/(\delta^2\nu^4\varepsilon^{4d+2}) \sim \ln(N^\alpha)$, having
\begin{align*}
    \sup\limits_{0<t<T}\sup \limits_{i=1\dots N}  \mathbb{E}[\mathcal{M}^i(t)] &\leq
\frac{C T^3\ln N^\alpha}{N}\big(1+ T^3 N^{T^3\alpha }\ln (N^{\alpha})\big), \\
 \sup\limits_{0<t<T}\sup \limits_{i=1\dots N} \mathbb{E}[\mathcal{N}^i(t)] &\leq
\frac{CT^2\ln N^\alpha}{N}\big(1+ T^3 N^{T^3\alpha }\ln (N^{\alpha})\big),
    \end{align*}
where $C$ is a constant depending on $\gamma$, $\lambda$, $\beta$ and $\| f_0\|_ {L^1(\R^{2d})}$.
\end{proof}

\bibliographystyle{plain}
\bibliography{mflref.bib}

\end{document}